\documentclass[reqno,oneside,11pt]{amsart}
\usepackage{graphicx}
\graphicspath{ {./graphics/} }
\usepackage[T1]{fontenc}
\usepackage[utf8]{inputenc}
\usepackage{lmodern}
\usepackage[french,english]{babel}
\usepackage{geometry} 

\geometry{%
	a4paper,                % format de papier
	% Définition des marges :
	left= 3cm,            % marge intérieure à la page
	right = 3cm,          % marge extérieure
	top = 3cm,
	bottom = 3cm,
	% En-tête et pied de page :
	headheight=6mm,         % espace réservé à l'en-tête dans la marge top
	headsep=9mm,            % espace entre le corps et l'en-tête
	footskip=9mm            % espace entre le corps et le pied de page
}
\usepackage{amsmath,amssymb,amsfonts,amsthm,mathtools}
\usepackage{fancyhdr}
\usepackage{emptypage}
\pagestyle{fancy}

\fancyhf{}
\fancyhead[LE]{\leftmark}
\fancyhead[RO]{\rightmark}
\fancyfoot[C]{\thepage}

\fancypagestyle{plain}{
	\fancyhf{}
	\fancyfoot[RO,RE]{\thepage}
	
	}

\usepackage{comment} 
\usepackage{xcolor}

\usepackage[noadjust]{cite}
\usepackage[
pagebackref=true,
colorlinks=true,
urlcolor=purple,
linkcolor=purple!87!black,
citecolor=green!60!black,
pdfborder={0 0 0}
]{hyperref}
\renewcommand*{\backref}[1]{}
\renewcommand*{\backrefalt}[4]{[{\tiny%
		\ifcase #1 Not cited.%
		\or Cited on page~#2.%
		\else Cited on pages #2.%
		\fi%
	}]}
\usepackage{bookmark}
\usepackage{dsfont}
%% Colors

%% Common notation
\newcommand{\id}{\mathsf{id}}
\newcommand{\supp}[1]{\mathrm{supp}(#1)}

%% Numbers: Z, R, C, Q
\newcommand{\Z}{\mathbb{Z}}
\newcommand{\R}{\mathbb{R}}

%% Probabilty: Proba, expectation

\newcommand{\E}{\mathbb{E}}

\newcommand{\tsp}{\mathrm{TSP}}

\newcommand{\thistheoremname}{}

\newtheorem*{genericthm*}{\thistheoremname}
\newenvironment{namedthm*}[1]
{\renewcommand{\thistheoremname}{#1}%
	\begin{genericthm*}}
	{\end{genericthm*}}
\theoremstyle{plain}
\newtheorem{thm}{Theorem}[section] % reset theorem numbering for each chapter
\newtheorem{prop}[thm]{Proposition}
\newtheorem{lem}[thm]{Lemma}
\newtheorem{cor}[thm]{Corollary}
\newtheorem{rem}[thm]{Remark}
\newtheorem{defin}[thm]{Definition}
 % same for example numbers
\newtheorem{question}[thm]{Question}

\newtheorem{claim}[thm]{Claim}

\title[{The CLT for lamplighters with an acyl. hyperbolic base}]{The CLT for lamplighter groups with an acylindrically hyperbolic base}
\author{Maksym Chaudkhari} 
\email[Maksym Chaudkhari]{mc637@usf.edu}
\address{Department of Mathematics and Statistics, University of South Florida, 4202 E Fowler Ave, CMC 342, Tampa, FL, 33620, USA}

\author{Christian Gorski} 
\email[Christian Gorski]{cgorski1@uw.edu}
\address{Department of Mathematics, University of Washington, Seattle, WA 98195, USA}

\author{Eduardo Silva} 
\email[Eduardo Silva]{eduardo.silva@uni-muenster.de, edosilvamuller@gmail.com}
\address{University of Münster, Einsteinstrasse 62, Münster 48149, Germany}
\urladdr{\url{https://edoasd.github.io/eduardo_silva_math/}}

\date{}

%%%%%%%%%%%%%%%%%%%%%%%%%%%%%%%%%%%%%%%%%%%%%%%%%%%%%%%%%%%%%%%%%%%%%%%%%%%%%%%%%%%%%%%%%%%%%%%%

\begin{document}
	\begin{abstract} 
		We prove a Central Limit Theorem for the drift of a non-elementary random walk with a finite exponential moment on a wreath product $A\wr H\coloneqq \bigoplus_{H} A\rtimes H$ with $A$ a non-trivial finite group and $H$ a finitely generated acylindrically hyperbolic group. We also provide the upper bounds on the central moments of the drift. Furthermore, our results extend to the case where $A$ is an arbitrary (possibly infinite) finitely generated group.
		
	\end{abstract}
	\maketitle
\section{Introduction}
The classical Central Limit Theorem (CLT) states that if $(X_n)_{n\ge 1}$ is a collection of i.i.d. random variables with $\ell\coloneqq \E(X_1)<\infty$ and $\sigma^2\coloneqq \mathbb{V}(X_1)<\infty$, then the sequence \[\frac{\sum_{i =1}^{n}X_i-\ell n}{\sigma\sqrt{n}}, n\ge 1,\] converges in distribution to a standard Gaussian \cite{Lindeberg1922} (see also \cite[Chapter X.1]{Feller1968}). 
Extensions of this result have been studied intensely in the context of random walks on groups, a class of countable state Markov chains satisfying space-homogeneity.

More precisely, let $G$ be a finitely generated group endowed with a left-invariant metric $d$. Let $\mu$ be a probability measure on $G$ that is \emph{non-degenerate}: the support of $\mu$, $\text{supp}(\mu)$, generates $G$ as a semigroup. The \emph{$\mu$-random walk on $G$} is the sequence of random variables $Z_n=g_1\cdots g_n$, $n\ge 1$, where $(g_i)_{i\ge 1}$ are i.i.d. distributed according to $\mu$. If $\E^{\mu}\left[d(\id_G,Z_1)\right]<\infty$, then by Kingman's subadditive ergodic theorem, \cite{Kingman1968}, we have a well-defined \emph{asymptotic drift} $\ell\geq 0$, satisfying \[\lim_{n\to \infty}\frac{d(\id_G,Z_n)}{n}=\ell,\] almost surely and in $L^1$. One should compare this to the law of large numbers. 

The next natural line of investigation is with respect to the fluctuations of $d(\id_G,Z_n)$ around its mean. In the case $G=\mathbb{Z}^d$, this is addressed by the classical CLT. For other groups, however, one can observe a wide array of exotic behaviors for these fluctuations, which in general will strongly depend on the geometric properties of the group $G$.

There are finitely generated groups for which the normalized distances $d(\id_G,Z_n)/\sqrt{n}$ converge to a non-degenerate, non-Gaussian distribution (\cite[Proposition 6.2]{ErschlerZheng2022}), or for which it is not possible to normalize the differences $d(\id_G,Z_n)-\ell n$, $n\ge 1$, to obtain a convergence in law to a non-trivial distribution (see the remark after Theorem 8 in \cite{Bjorklund2010}).

Currently, all non-trivial examples of finitely generated groups known to satisfy a CLT for the distance to the identity with a Gaussian (or a combination of folded Gaussian) limiting distribution  are either finite extensions of abelian groups, which is close to the classical setting, or groups admitting sufficiently strong negative curvature properties, such as a proper action on a hyperbolic space.\footnote{By non-trivial we mean the groups that satisfy the CLT for a sufficiently large class of random walks and the CLT for them is not an immediate corollary of the CLT for other known examples.} Notably, this class of groups includes non-abelian free groups \cite{SawyerSteger1987, Ledrappier2001}, hyperbolic groups \cite{Bjorklund2010,BenoistQuint2016hyperbolic}, mapping class groups and $\mathrm{Out}(F_N)$ \cite{Horbez2018}. A more comprehensive list of groups that satisfy a CLT is given in Subsection \ref{subsection: background}. 

The main goal of the present paper is to establish the CLT for the distance to the identity in a new class of groups, namely wreath products of the form $A\wr H \coloneqq \bigoplus_H A\rtimes H$, where $\bigoplus_H A$ denotes the direct sum of isomorphic copies of a given finitely generated group $A$ indexed by a finitely generated acylindrically hyperbolic group $H$. The definition of the class of acylindrically hyperbolic groups is recalled in Definition \ref{def: ac hyp}, and it contains, in particular, free groups, discrete subgroups of $\text{Isom}(\mathbb{H}^n)$, and more generally all hyperbolic groups. The action of $H$ on $\bigoplus_H A$ by conjugation within $A\wr H$ corresponds to translation of the indices of the direct sum (see Subsection \ref{subsection: wreath products}). In the crucially important special case of a finite group $A$, the wreath product is also called a \emph{lamplighter group}.  This name is related to a classical interpretation, where the elements of $\bigoplus_H A$ are viewed as lamp configurations on $H$, while the $H$-component of each element of $A\wr H$ describes the position of a lamplighter who is changing states of individual lamps.
To simplify the exposition, we will work only with the lamplighter groups for the rest of the article until the last Subsection \ref{subsec: f.g. lamps}.
We call a metric on a lamplighter group $A\wr H$ \emph{standard} if it is the word metric associated with the generating set composed of a disjoint union of $A$  and a finite generating set of $H$ (see Subsection \ref{subsubsection: the word metric}).

\begin{thm}\label{CLT acylindrically hyperbolic}
	Let $A$ be a non-trivial finite group and let $H$ be a finitely generated acylindrically hyperbolic group. Let $d$ be a standard metric on $A\wr H$ and let $\mu$ be a symmetric probability measure on $A\wr H$ with a finite exponential moment. Suppose that the projection of $\mu$ to $H$  is a generating measure, or more generally, a non-elementary measure on $H$. Denote by $(Z_n)_{n\ge 0}$ the $\mu$-random walk on $A\wr H$ and by $\ell$ its asymptotic drift. Then there exists $\sigma>0$ such that the sequence of random variables $$ \left\{\frac{d(\id_G,Z_n) -\ell n}{\sigma \sqrt{n}} \right\}_{n \geq 1} $$  converges in distribution to a standard Gaussian. In addition, for any $p > 1$ there exists a constant $C=C(p,d,\mu)>0$ such that 
	\[\E^{\mu}\left|d(\id_G,Z_n) -\E^{\mu}d(\id_G,Z_n)\right|^p \leq Cn^{\frac{p}{2}}(\log n)^{5p}  \text{ for all } n\geq 1. \]
\end{thm}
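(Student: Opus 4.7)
The plan is to reduce the statement to a central limit theorem for a real-valued functional of the projected random walk $\bar Z_n$ on the acylindrically hyperbolic base $H$, and to extract from the trajectory of $\bar Z$ a nearly additive decomposition to which a classical CLT applies. The starting point is the standard formula for the word metric on a lamplighter: writing $Z_n = (f_n, \bar Z_n)$ with $f_n\colon H\to A$ of finite support,
\[ d(\id_G,Z_n) = \tsp_H\bigl(\id_H\to \bar Z_n;\; \supp{f_n}\bigr) + \sum_{x\in\supp{f_n}} |f_n(x)|_A, \]
where $\tsp_H$ denotes the length of the shortest path in the Cayley graph of $H$ starting at $\id_H$, visiting every point of $\supp{f_n}$, and ending at $\bar Z_n$. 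Since $A$ is finite, the lamp term is bounded by a constant times $|\supp{f_n}|$, and $\supp{f_n}$ is contained in the trace $\mathcal R_n := \{\bar Z_0, \dots, \bar Z_n\}$. The main task is therefore to analyze the travelling salesman cost of visiting (essentially all of) $\mathcal R_n$ along the projected random walk on $H$.

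To control this TSP cost I would invoke the geometric machinery for random walks on acylindrically hyperbolic groups developed by Maher--Tiozzo, Mathieu--Sisto, and Gou\"ezel's pivotal technique: the finite exponential moment of $\mu$ combined with non-elementarity of the projection produces positive-density pivotal times $T_1<T_2<\dots$ with exponentially integrable gaps at which the increments $\bar Z_{T_i}^{-1}\bar Z_{T_{i+1}}$ are asymptotically i.i.d., and at which consecutive blocks are well aligned in an auxiliary hyperbolic space $Y$ on which $H$ acts acylindrically. Alignment forces each $\bar Z_{T_i}$ to lie close to a geodesic between $\bar Z_{T_{i-1}}$ and $\bar Z_{T_{i+1}}$, so the trace $\mathcal R_n$ becomes tree-like at the pivot scale and the TSP telescopes as $\tsp_H(\id_H\to \bar Z_n;\mathcal R_n) = \sum_i \Delta_i + (\text{error})$, with each block contribution $\Delta_i$ a functional only of the walk between consecutive pivots. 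The CLT then follows by applying a Gordin-type martingale approximation (or a direct renewal argument after resetting at regeneration times) to the stationary sequence $(\Delta_i)$; positivity of $\sigma$ comes from non-degeneracy of the law of $\Delta_1$, itself a consequence of the projected walk being non-elementary and having positive drift on $H$.

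The $L^p$ bound with the $(\log n)^{5p}$ correction is obtained by combining three ingredients: a Bernstein/Freedman-type concentration for the nearly i.i.d. sum $\sum_i \Delta_i$ using its exponential moments; the Mathieu--Sisto deviation inequalities for the projected walk to control the number of pivots and the length of ``cusp'' excursions up to polylogarithmic scales; and a deterministic $O((\log n)^c)$ bound on both the replacement error between $\supp{f_n}$ and $\mathcal R_n$ and the error in telescoping $\tsp_H$ at the pivots. The main obstacle will be precisely this last quantitative point: making the additive decomposition of the TSP at pivotal times robust against the gap between the Cayley geometry of $H$ (where $\tsp_H$ is defined) and the auxiliary space $Y$ (where alignment holds), while keeping all remainders within the $O(\sqrt{n}(\log n)^5)$ scale required by the stated moment bound. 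Controlling the ``bad'' pivots where alignment fails, and ensuring that lamps dropped during short excursions between pivots do not spoil the additivity of $\tsp_H$, is where the bulk of the technical work is expected to concentrate.
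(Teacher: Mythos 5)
Your proposal takes a genuinely different route from the paper's. You would organize the proof around a renewal decomposition at pivotal times of the projected walk $\bar Z$ on $H$, with a martingale or regeneration CLT applied to the block increments. The paper instead works entirely inside the Mathieu--Sisto defective adapted cocycle (DAC) framework: it shows that the defect $\Psi_{m,n}=d(\id_G,Z_m)+d(Z_m,Z_{m+n})-d(\id_G,Z_{m+n})$ satisfies $\E^{\mu}|\Psi_{m,n}|^p\lesssim(\log(m+n))^{5p}$, and then feeds this bound into a slow-growth generalization of Mathieu--Sisto's abstract CLT (Theorems~\ref{thm: MS Slow growth CLT} and~\ref{thm: moments_general case}). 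The DAC route avoids pivotal times, regeneration structures, and martingale approximation entirely: the only probabilistic input needed is a single-split defect estimate, and the abstract theorems deliver both the Gaussian limit and the central moment bounds. This is more modular than a renewal decomposition, which would require iterating a TSP-splitting estimate at every pivot and controlling the accumulated error across all of them.

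Beyond the difference in route, several steps in your sketch are either incorrect or missing a key ingredient. First, $\supp{f_n}\subseteq\{\bar Z_0,\ldots,\bar Z_n\}$ is false for a general step distribution: a single step $X_k$ can light lamps anywhere within distance $d_G(\id_G,X_k)$ of $\bar Z_{k-1}$, and the finite exponential moment only guarantees that this radius is $O(\log n)$ simultaneously for all $k\le n$ with overwhelming probability, so you only get containment in an $O(\log n)$-neighborhood of the trace, not in the trace itself. Second, you flag but do not close the gap between alignment in the auxiliary hyperbolic space $Y$ and the geometry of the Cayley graph of $H$ in which the TSP lives; the orbit map $H\to Y$ need not be proper nor a quasi-isometry, so alignment of pivots in $Y$ does not, by itself, control the TSP in $H$. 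The paper's fix is Proposition~\ref{prop: tracking} (from Mathieu--Sisto), which gives $O(\log n)$ tracking of a Cayley-graph geodesic directly; without an estimate of exactly that form your decomposition does not get off the ground. Third, the claim that ``the TSP telescopes'' at the pivots needs a deterministic mechanism: one must bound the extra cost of forcing a TSP tour to pass through an intermediate point $B=\bar Z_m$ near the tracked geodesic. That bound is precisely Lemma~\ref{comb_lemma_1d}, which gives a defect of at most $24(N+1)D$ in terms of the tracking radius $D$ and the number $N$ of lamps in the transition region near $B$; this combinatorial lemma is the heart of the argument, and the exponent $5p$ in the stated moment bound is exactly the product of the resulting estimates $D=O(\log^2 n)$ and $N=O(\log^3 n)$. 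Your approach would need an analogue of this lemma, iterated over pivots, before the $(\log n)^{5p}$ bound could be claimed.
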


Although $H$ is acylindrically hyperbolic and hence can be studied via its actions on hyperbolic spaces, the standard metric on $A\wr H$ behaves very differently from any hyperbolic metric. This can be seen from various geometric and combinatorial properties of the standard word metric on wreath products; we now describe an illustrative (though by no means exhaustive) list of examples.
\begin{enumerate}

	\item \label{item: examples 1} The \emph{divergence function} of a one-ended group is a quasi-isometry invariant that measures the minimal length of paths between two points avoiding a given ball in the Cayley graph. One-ended acylindrically hyperbolic groups always have superlinear divergence \cite[Theorem 2 and Proposition 4.1]{Sisto2016} (see also the discussion on p.~2503 of \cite{GruerSisto2018}), whereas wreath products always have linear divergence \cite[Theorem 1]{Issini2023}.
	
	\item In a wreath product, for every arbitrarily large radius, there exists an element that maximizes the distance to the identity within the ball of that radius around it (such elements are called \emph{dead ends of arbitrarily large depth}) \cite[Theorem A]{Silva2023}. These elements can be interpreted as regions of the group that exhibit positive curvature (see, e.g., \cite[Theorem 2.7]{kropholler2020} and \cite{BN}). This property is not satisfied by any word metric on a hyperbolic group \cite{Bogopolski}.
	
	\item In \cite{Parry1992} it is shown that the growth series of balls in groups of the form $A\wr F_k$, where $A$ is finite and non-trivial, and $F_k$ is a free group of rank $k\ge 2$ is not a rational function. This contrasts with the case of hyperbolic groups, where the growth series is always rational \cite[Theorem 3.4.5]{Epstein1992} (see also \cite{Cannon1984}).
\end{enumerate}

Point \eqref{item: examples 1} is key here. In proofs of the CLT for random walks on groups with hyperbolic properties, one uses divergence of the group to argue that the geodesic connecting $\id_G$ to $Z_{2n}$ must pass through a controlled neighbourhood of $Z_n$ with good probability, from which one can approximate $d(\id_G, Z_{2n})$ by a sum of independent random variables. This is the perspective taken in \cite{ChawlaCLT}. As wreath products have linear divergence, this ingredient is missing, so one must use deeper properties of the word metric.

As we explain in Subsection \ref{subsubsection: the word metric}, the values of the standard word metric on $A\wr H$ can be expressed in terms of solutions of the Traveling Salesman Problem (TSP) on the Cayley graph of $H$. Namely, the word length of an element can be expressed as the minimal length of a path on the Cayley graph of $H$ that starts at the identity $\id_H$, visits all positions where there are lamps lit, and which finishes its journey in some determined position of $H$.

In the proof of Theorem \ref{CLT acylindrically hyperbolic}, we leverage the geodesic tracking results  of Mathieu and Sisto \cite{MathieuSisto2020} to bound the defect of the $\tsp$ along a trajectory of a random walk in $H$. In our setting, this defect roughly quantifies the discrepancy between the sum of optimal solutions to two TSP problems versus the optimal solution to their concatenation. The importance of this quantity is explained by the connection between the standard metric on the Cayley graph of $A\wr H$ with the $\tsp$ in the Cayley graph of $H$, outlined in Subsection \ref{subsubsection: the word metric}. Here, the locations which must be visited in the TSPs are (roughly) the lamps which have been lit during the course of the random walk over different time intervals---times in $[0,m]$, $[m,n+m]$, and $[0,n+m]$ respectively. Since random walks on acylindrically hyperbolic groups track geodesics, all lamps lit during a given time interval lie, with high probability, close to a geodesic from the origin to the lamplighter’s final position in $H$. Since the random walk has linear progress, the lamps lit from times $[0,m]$ and from times $[m,n+m]$ do not have too much spatial overlap with high probability. A deterministic argument then allows us to quantitatively bound the defect in terms of the distance to the geodesic and the overlap between segments. The defect bounds we obtain are not uniform in $m,n$ as they are in \cite{MathieuSisto2020}, but they grow slowly in $m,n$, and a slight modification of the argument in \cite{MathieuSisto2020} gives the same conclusion. The results of Mathieu and Sisto, along with the variants relevant to our setting, are recalled in Section \ref{section: MS}. These are then combined, in Section \ref{section: final proof CLT}, with deterministic combinatorial estimates for the TSP established in Section \ref{section: combinatorial}, to prove Theorem \ref{CLT acylindrically hyperbolic}. Moreover, with some additional technical modifications, our argument allows us to obtain the CLT for the random walks on the more general wreath products $A\wr H$ where $A$ is an arbitrary finitely generated group. We explain these modifications in the Subsection \ref{subsec: f.g. lamps}, see Corollary \ref{cor: CLT for general wr product}.

\subsection{Background}\label{subsection: background}
The CLT for the distance to the identity on non-abelian free groups is due to Sawyer and Steger \cite{SawyerSteger1987} and Ledrappier \cite{Ledrappier2001} for finitely supported step distributions. This was later extended to non-elementary hyperbolic groups by Björklund \cite{Bjorklund2010} for the Green metric under a finite exponential moment assumption, and subsequently by Benoist and Quint \cite{BenoistQuint2016hyperbolic} to general word metrics under a finite second moment assumption. Mathieu and Sisto \cite{MathieuSisto2020} further extended these results to a larger class of groups, including all acylindrically hyperbolic groups. Additional classes of groups that satisfy a version of the CLT are linear groups by Benoist and Quint \cite{BenoistQuint2016}, mapping class groups and $\mathrm{Out}(F_n)$ by Horbez \cite{Horbez2018}, groups that act on $\mathrm{CAT}(0)$-spaces with contracting isometries by Le Bars \cite{LeBars2022}, groups with contracting elements by Choi \cite{Choi2023}, groups that contain superlinear divergent geodesics by Chawla, Choi, He and Rafi \cite{ChawlaCLT}, and groups of affine transformations of a horospherical product of hyperbolic spaces by Bahmanian, Forghani, Gekhtman and Mallahi-Karai \cite{BahmanianForghaniGekhtmanMallahiKarai2024}.

Wreath products are a classical construction in group theory that have provided many examples of interesting probabilistic phenomena in random walks on groups. One of the first such examples is due to Kaimanovich and Vershik \cite{KaimanovcihVershik1983}, who showed that the lamplighter group $\Z/2\Z\wr \Z$ has exponential volume growth but that every random walk on it with a symmetric, finitely supported step distribution has zero asymptotic drift. Before their work, it was known that positive asymptotic drift implies exponential growth, but the converse was open. In contrast, they proved that any non-degenerate symmetric random walk with finitely supported steps on $\Z/2\Z\wr \Z^d$ for $d\ge 3$ has a non-trivial Poisson boundary, thus providing the first examples of an amenable group with this property. Varopoulos used the wreath product $\Z/2\Z\wr \Z$ to provide the first example of a group for which the return probability to the origin of a simple random walk decays asymptotically as $\mathrm{exp}(-n^{1/3})$ \cite{Varopoulos1983,Varopoulospotential}. This property was further studied by Pittet and Saloff-Coste \cite{PittetSaloffCoste2002}, who showed that for any fixed $0<\alpha<1$ there are wreath products of the form $G\wr \Z$ for which the return probability to the origin of a simple random walk decays asymptotically faster than $\mathrm{exp}(-n^{\alpha})$. Erschler studied the asymptotic growth of the expected distance to the identity on such wreath products $G\wr \Z$, showing in particular, that the drift of a random walk can grow at an intermediate rate between $\sqrt{n}$ and $n$ \cite{Erschlerasymptotics,Erschlerdrift}, answering a question of Vershik. 

More recently, Erschler and Zheng studied random walks on $\mathbb{Z}/2\mathbb{Z}\wr \mathbb{Z}^2$ with a finite $(2+\varepsilon)$-moment for some $\varepsilon>0$, and proved that $d(\id,Z_n)$, normalized by its expectation, converges almost surely to a constant \cite[Theorem 1.1]{ErschlerZheng2022}. They also proved in \cite[Proposition 6.2]{ErschlerZheng2022} that for any non-trivial finite group $A$, the drift of a simple random walk on $A\wr \mathbb{Z}$ satisfies the following version of a central limit theorem: there are constants $\sigma, c_1,c_2>0$ such that the sequence of random variables $d(\id,Z_n)/(\sigma \sqrt{n})$ converges in law to the distribution $c_1(\mathcal{R}_1-|B_1|)+c_2|B_1|$, where $(B_t)_{t\ge 0}$ is a standard Brownian motion and $\mathcal{R}_t$ denotes the size of its range at time $t$. Regarding other limit laws, Revelle established a law of the iterated logarithm on certain wreath products $G\wr \Z$ \cite{Revelle20032}. Our main result (Theorem \ref{CLT acylindrically hyperbolic}) is the first to establish a CLT for random walks on wreath products whose base group is not $\Z$.

One cannot expect a version of the CLT to hold for random walks on every finitely generated group. Indeed, as Björklund remarked in the paragraph following Theorem 8 in \cite{Bjorklund2010}, the distance to the identity in a simple random walk on $(\Z\wr\Z)\times F_2$ exhibits fluctuations of order $n^{1/2}$ in directions governed by the $F_2$ factor and of order $n^{3/4}$ in those governed by the $\Z\wr\Z$ factor. Moreover, Erschler and Zheng constructed finitely generated groups $G$ for which $d(\id,Z_n)$, normalized by its expectation, converges in law to different limit distributions along distinct subsequences \cite[Proposition 6.6]{ErschlerZheng2022}.

\subsection{Organization}
In Section \ref{section: preliminaries} we recall several basic definitions and explain the terminology used throughout the article. Next, in Section \ref{section: MS}, we review the results of Mathieu and Sisto established in \cite{MathieuSisto2020} and explain how to generalize them to the case where the defect has slowly growing moments.

In Section \ref{section: combinatorial}, we prove the main deterministic geometric lemma for the solutions of the $\mathrm{TSP}$, see Lemma \ref{comb_lemma_1d}. Finally, in Section \ref{section: final proof CLT} we provide the probabilistic argument and finish the proof of the main results of the article. Since the case of the finite group of lamps $A$ already contains the main ideas behind the proof, to simplify the exposition, we first establish the CLT and the upper bounds on the central moments of the drift for the lamplighter groups and prove Theorem \ref{CLT acylindrically hyperbolic}. The modifications required in the general case of a finitely generated group $A$ are outlined in Subsection \ref{subsec: f.g. lamps}, and in particular, in the proof of Corollary \ref{cor: CLT for general wr product}.

\subsection{Acknowledgments}
This work started during the thematic program ``Randomness and Geometry'' at the Fields Institute for
Research in Mathematical Sciences in 2024. The authors thank the Fields Institute and the organizers of the program for their hospitality. Maksym Chaudkhari and Christian Gorski were also supported by the Fields Postdoctoral Fellowships during their stay at the Fields Institute. Eduardo Silva is funded by the Deutsche Forschungsgemeinschaft (DFG, German Research Foundation) under Germany's Excellence Strategy EXC 2044 –390685587, Mathematics Münster: Dynamics–Geometry–Structure.  We thank Kunal Chawla for his participation in the first stages of the project, and in particular, for the discussions on the results of Mathieu and Sisto and  Gou{\"e}zel, for the exploration of an approach relying on the pivoting technique in the case of a hyperbolic base group, and for the feedback on a preliminary draft of this work. We also thank Alessandro Sisto, Giulio Tiozzo, and Tianyi Zheng for helpful discussions during the program at the Fields Institute.

\section{Preliminaries}\label{section: preliminaries}

\subsection{Metric spaces and paths}
In this article, we will always assume that we work with a proper geodesic metric space $(X,d)$. 
A path $\gamma \colon [a,b] \rightarrow X$ is a continuous map from the segment $[a,b] \subset \mathbb{R}$ to $X$. The path $\gamma$ is endowed with the natural orientation and we refer to $\gamma(a)$ as a starting point  and $\gamma(b)$ as the ending point of $\gamma$. 
If $\gamma$ consists of a single point, this point is considered to be both the ending point and the starting point of $\gamma$.

\begin{defin}
	A path $\gamma \colon [a,b] \rightarrow X$ is called a geodesic segment connecting $\gamma(a)$ and $\gamma(b)$ if it is an isometric embedding of $[a,b]$ into $X$. The length of a geodesic segment will be denoted by $|\gamma|=d(\gamma(a), \gamma(b))=|b-a|$.
\end{defin}
Let us also recall the notion of piecewise geodesic and its length.
\begin{defin}\label{def: piecewise-geodesic}
	We call a path $\alpha \colon[a,b] \rightarrow X$ a piecewise geodesic if it can be represented as the concatenation of finitely many geodesic segments. If $\gamma_1, \ldots, \gamma_k$ is the list of these segments, then the length of $\alpha$ is defined as $|\alpha|=\sum_{i=1}^{k}|\gamma_i|$.
\end{defin}

It is easy to see that the length of the piecewise geodesic does not depend on the choice of representation.

To simplify the notation, we will extend the length notation to lists of geodesic segments.
For any two points $P, Q \in X$ we will denote by $PQ$ a geodesic segment joining them, and the length of this segment will be denoted by $|PQ|=d(P,Q)$. Moreover, for any finite list of geodesic segments $\theta$, we will denote by $|\theta|$ its total length computed as the sum of the lengths of its entries. Notice that if the list has repetitions, we also repeat the corresponding lengths in the sum.

Finally, we will also use the the following definition of a projection.
\begin{defin}
	For a point $P \in X$ and geodesic segment $\gamma$ we will denote by $\pi_{\gamma}(P)$ the set of all points on $\gamma$ that minimize the distance to $P$.    
\end{defin}

\subsection{The Traveling Salesman Problem.} We will work with a version of the Traveling Salesman Problem with fixed starting and ending points.

\begin{defin}
	For the points $A, B \in X$ and a finite set of points $L \subset X$, the $\mathrm{TSP}(A,L,B)$ denotes the length of the shortest piecewise geodesic path in $X$ which starts at $A$, ends at $B$, and visits every point in $L$. We will call a \textit{solution of} $\mathrm{TSP}(A,L,B)$ any path $\alpha$ of minimal length that satisfies these conditions.  
\end{defin}

It is easy to see, that to find a solution of the $\tsp(A, L,B)$ we only need to consider piecewise geodesic paths $\alpha$ constructed by connecting the points from $L \cup \{A,B\}$  by geodesic segments.

Whenever $\alpha$ is a path of this form, we can list the points of $L=\{l_1,\ldots,l_k\}$ in the order of their first appearance along $\alpha$,  $L=(l_{\pi(1)},\ldots,l_{\pi(k)})$. Then the length of $\alpha$, denoted by $|\alpha|$, is computed as  $$|\alpha|=
d(A,l_{\pi(1)})+\sum_{i=1}^{k-1}d(l_{\pi(i)},l_{\pi(i+1)})+d(l_{\pi(k)},B).$$  

Notice that $\pi \in \mathrm{Sym}(k) $ determines $\alpha$ uniquely up to the choice of the geodesic segments connecting $A$ with $l_{\pi(1)}$, $B$  with $l_{\pi(k)}$, and $l_{\pi(i)}$ with $l_{\pi(i+1)}$ for $i=1, \ldots, k-1$. Therefore, solving the TSP is equivalent to finding a permutation that minimizes the sum above. In particular, the shortest path always exists in this setting.

For any solution $\alpha$ of $\mathrm{TSP}(A,L,B)$, we will refer to the points in $\{A,B\}\cup L$ as the \textit{nodes} of $\alpha$.

\subsection{Acylindrically hyperbolic groups}
We recall the definition and notable examples of the acylindrically hyperbolic groups, and we refer the reader to \cite{Osin2016} for a detailed treatment of the subject.

Recall that a geodesic metric space $(X,d)$ is called \textit{hyperbolic}, if there exists $\delta > 0 $ such that for every geodesic triangle $T$ in $X$, 
every side of $T$ is contained in the $\delta$-neighborhood of the union of the other two sides.

Let $G$ be a group that acts on a metric space $(X,d)$ by isometries. This action is called \textit{acylindrical} if for every $r \geq 0$, there exist $N,\, R > 0$ such that whenever $x,y \in X$ satisfy $d(x,y)>R$, there are at most $N$ elements $g \in G$ that satisfy both inequalities $d(x,gx)<r$ and $d(y,gy)<r$. The action is called \textit{non-elementary} if $G$ is not virtually cyclic and the $G$-orbits are unbounded.

\begin{defin}\label{def: ac hyp}
	A group $G$ is called acylindrically hyperbolic if it admits a non-elementary acylindrical action on a hyperbolic geodesic metric space $(X, d)$ by isometries. 
\end{defin}

Examples of acylindrically hyperbolic groups include all non-elementary hyperbolic groups, all non-elementary relatively hyperbolic groups, $Out(F_n), n\geq 2 $, the mapping class groups $MCG(\Sigma_{g,p})$ of connected oriented surfaces of genus $g \geq 0$ with $p \geq 0$ punctures, except for the case when $g=0$ and $p \leq 3$, and many fundamental groups of compact $3$-manifolds (see \cite{MinasyanOsin2015}).

The main reason why we consider the base group $H$ to be acylindrically hyperbolic in the statement of Theorem \ref{CLT acylindrically hyperbolic} is to be able to use Proposition \ref{prop: tracking} in Section \ref{section: final proof CLT}, which gives quantitative estimates on how a random walk on an acylindrically hyperbolic group tracks geodesic segments.

\subsection{Wreath products}\label{subsection: wreath products}

We consider the restricted wreath product $G=A\wr H$, where $A$ is a non-trivial finite group and $H$ is a finitely generated group. More explicitly, $G=\bigoplus_H A \rtimes H$, where the direct sum $\bigoplus_H A$ is viewed as the group of functions $f: H \rightarrow A$ with finite support, and $H$ acts on $\bigoplus_H A$ by left translation, so for all $h, x \in H$ and all $ f \in \bigoplus_H A $, we have $h.f(x)=f(h^{-1}x).$
If $g_1=(f_1,h_1)$ and $g_2=(f_2,h_2)$ are two elements of $G=A\wr H$, their product $g_1g_2$ can written explicitly as $g_1g_2=(f_1\cdot (h_1.f_2),h_1\cdot h_2)$.

\subsubsection{The standard word metric on wreath products}\label{subsubsection: the word metric}
Let $S_H$ be a finite symmetric generating set of $H$. 
The standard generating set $S$ of $G=A \wr H$ associated with $S_H$ is given by  

\[ S \coloneqq  \Big\{(\delta_a, \id_H), \, (\mathbf{0}, s) \Big| a \in A, \; \text{and}  \;  s \in S_H \Big\},
\]

where $\delta_a (x) =a$ iff $x =\id_H$,   and otherwise $\delta_a (x) =\id_A $, and $\mathbf{0} (x) =\id_A$ for any $x \in H$.

It is well-known that the word length of any $g=(f,h) \in G $ with respect to $S$ can be computed as follows:
\[|g|_{\mathrm{S}}=\mathrm{TSP}(\id_H, \supp{f},h)  +|\supp{f}|,\]

where the $\tsp$ is solved with respect to the distance $d_H$ on the Cayley graph of $H$ induced by the word metric corresponding to $S_H$, see, e.g., \cite[Theorem 1.2]{Parry1992}.

\subsection{Random walks on groups}\label{subsection: random walks}

Let $G$ be a countable group and consider a probability measure $\mu$ on $G$. Consider the product space $\Omega\coloneqq G^{\Z_{+}}$ endowed with the product $\sigma$-field. For each $n\ge 1$ we denote by
\begin{equation*}
	\begin{aligned}
		X_n:\Omega&\to G\\
		w\coloneqq(w_1,w_2,\cdots)&\mapsto X_n(w)\coloneqq w_n
	\end{aligned}
\end{equation*}
the $n$-th coordinate map. We endow $\Omega$ with the product probability measure $\mu^{\Z_{+}}.$

We denote by
\begin{equation*}
	\begin{aligned}
		\theta:\Omega&\to \Omega\\
		w\coloneqq(w_1,w_2,\cdots)&\mapsto \theta(w)\coloneqq (w_2,w_3,\ldots)
	\end{aligned}
\end{equation*}
the shift map in the space of increments.

Now we define the $\mu$-random walk $\{Z_n\}_{n\ge 0}$ on $G$ as follows. We define $Z_0(w)=\id_G$ for each $w\in \Omega$, and for each $n\ge 1$ we set
\[
Z_n(w)\coloneqq Z_{n-1}(w)\cdot X_n(w).
\]
We remark that $Z_n(w) (Z_m\circ \theta^n)(w)=Z_{n+m}(w)$, for each $w\in \Omega$ and $n,m\ge 1$.

When the group $G$ is endowed with a left-invariant metric $d$, we say that $\mu$ has finite exponential moment with respect to $d$ if there exists $\alpha >0 $ such that $$e(\mu,d) \coloneqq \E^{\mu}\exp(\alpha d(\id,X_1)) < \infty.$$ 

The reader may find a more detailed discussion of non-elementary measures on acylindrically hyperbolic groups, for example, in \cite[Section 8 and 9]{MathieuSisto2020}. We will simply state the definition below and note that for an acylindrically hyperbolic group $G$, every measure $\mu$ whose support $\supp{\mu}$ generates $G$ as a semigroup is non-elementary (see, for example \cite[Remark 9.2]{MathieuSisto2020}).
\begin{defin}
	Let $G$ be an acylindrically hyperbolic group and assume that its acylindrical hyperbolicity is witnessed by the action of $G$ on a geodesic Gromov hyperbolic space $X$. We will call the measure $\mu$ \textit{non-elementary}, if the  subsemigroup of $G$ generated by the support of $\mu$ contains two loxodromic elements that freely generate a free group. 
\end{defin}
For the definition of loxodromic elements, see for example, \cite[page 2]{Osin2016}.

Finally, if $G=A \wr H$, then the positions of the random walk can be described explicitly as follows, if  $Z_n =(F_n,g_n)$  and $X_n=(f_n,h_n)$ for $n \geq 1$, then we have $$Z_{n+1}=(F_{n+1}, g_{n+1})=\left(F_{n} \cdot (g_{n}.f_{n+1}), g_{n}\cdot h_{n+1}\right).$$
\section{Mathieu-Sisto deviation inequalities and consequences}\label{section: MS}

Let $G$ be a countable group equipped with a probability measure $\mu$.\footnote{The results of Mathieu and Sisto that we describe below also hold in greater generality, when $G$ is any countable set, but for our purposes it is sufficient to consider only groups.} Throughout this section we follow the definitions of the space $\Omega$, the shift $\theta$, and the coordinate functions $X_n(w), n \in  \Z_+$, stated in the Subsection \ref{subsection: random walks}.

In the following two subsections we recall the main probabilistic results from \cite{MathieuSisto2020}, together with their variants that we will use in the proof of Theorem \ref{CLT acylindrically hyperbolic}.
\subsection{Defective adapted cocycles and the central limit theorem}
A sequence $\mathcal{Q}=\{Q_n\}_{n\ge 1}$ of maps $Q_n:\Omega\to \R$ that are $\sigma(X_1,\ldots, X_n)$, for each $n\ge 1$, is called a \emph{defective adapted cocycle}, and which we abbreviate as \textit{DAC}. We will use the convention $Q_0\equiv 0.$ 

In this paper we consider defective adapted cocycles constructed from random walks on groups. Whenever $Z_n, n \geq 0$, is a random walk on a countable group $G$, and $f:G \to \mathbb{R}$ is a function, there is an associated DAC defined by $Q_n(w)=f(Z_n(w))$ for each $ n \geq 1$. In particular, if $d$ is a word metric on $G$, then we call the DAC associated with function $f(g)=d(\id_G,g)$ the \emph{length defective adapted cocycle on $G$}.

The \emph{defect} of $\mathcal{Q}$ is the collection of maps $\Psi=\{\Psi_{m,n}\}_{m,n\ge 0}$ defined by
\[
\Psi_{m,n}(w)=Q_m(w)+(Q_n\circ \theta^m)(w) - Q_{n+m}(w), \text{ for each }w\in \Omega \text{ and }m,n\ge 0.
\] 

In particular, the defect of the length DAC is given by
\[\Psi_{m,n} (w)=d(\id_G,  Z_m)+d(Z_m,Z_{m+n}) -d(\id_G, Z_{m+n})\text{ for each }w\in \Omega \text{ and }m,n\ge 0.\]

\begin{defin}
	For $p \geq 1$, we say that $\mathcal{Q}$ has \emph{finite $p$-th moment}, if $\E^{\mu}[|Q_1|^{p}] < \infty$. Moreover, if $\sup_{m,n\ge 0} \left\{ \E^{\mu}\left[|\Psi_{m,n}|^p\right]\right\}  <\infty$ then we say that $\mathcal{Q}$ satisfies the \emph{$p$-th moment deviation inequality}.   
\end{defin}

The following theorem, essentially a combination of \cite[Theorem 4.2]{MathieuSisto2020} and \cite[Theorem 3.3]{MathieuSisto2020}, shows that the central limit theorem holds for defective adapted cocycles that satisfy the second moment deviation inequality.

\begin{thm}[{\cite[Theorem 4.2]{MathieuSisto2020}}] \label{thm: MS general CLT with constant deviation ineq}
	Let $G$ be a countable group endowed with a probability measure $\mu$. Consider $\mathcal{Q}$ a defective adapted cocycle on $\Omega=G^{\Z_{+}}$, and denote by $\{\Psi_{m,n}\}_{m,n\ge 0}$ its defect. Suppose that $\mathcal{Q}$ satisfies $\E^{\mu}[|Q_1|^{2}]<\infty$ and	$$\sup_{m,n\ge 0} \left\{ \E^{\mu}\left[|\Psi_{m,n}|^2\right]\right\}  <\infty$$
	
	Then, there exist real constants $\ell =\ell(\mu,\mathcal{Q})$ and $\sigma=\sigma(\mu, \mathcal{Q}) \in \R$ such that the random variables $\frac{1}{\sqrt{n}}\left(Q_n-\ell n\right)$ converge in law to a centered Gaussian random variable with variance $\sigma^2$.

	Moreover, the constant $\ell$ is also equal to the limit of $\frac{Q_n}{n}$ in $L^1(\Omega, \mu^{\Z_+})$ as $n\to \infty$.\footnote{The statement in the last sentence is proved in \cite[Theorem 3.3]{MathieuSisto2020}.}
\end{thm}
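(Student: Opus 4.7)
The plan is to handle two stages: existence of the drift $\ell$, and convergence in law of the fluctuations via a martingale decomposition. For the drift, the defective cocycle identity $Q_{n+m} = Q_m + Q_n\circ\theta^m - \Psi_{m,n}$, together with $\sup_{m,n} \E^\mu|\Psi_{m,n}| < \infty$ (by Cauchy--Schwarz and the $L^2$ hypothesis on the defect), gives near-subadditivity of $n\mapsto \E^\mu Q_n$, so Fekete's lemma yields $\ell := \lim_n \E^\mu Q_n/n \in \R$. An $O(n)$ variance bound for $Q_n$, obtained by iterating the defect identity and using near-orthogonality of $\theta^k$-shifted pieces combined with the $L^2$ defect bound, upgrades this to convergence of $Q_n/n$ to $\ell$ in $L^1$, giving the last sentence of the theorem.

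For the CLT, for each fixed $n$ introduce the Doob martingale $M_k^{(n)} := \E^\mu[Q_n \mid \mathcal{F}_k]$ with respect to the filtration $\mathcal{F}_k := \sigma(X_1,\ldots,X_k)$, $0 \le k \le n$. Using $Q_n = Q_k + Q_{n-k}\circ\theta^k - \Psi_{k,n-k}$ and the independence of $\theta^k$-shifted coordinates from $\mathcal{F}_k$, we obtain
\[
M_k^{(n)} = Q_k + \E^\mu Q_{n-k} - \E^\mu[\Psi_{k,n-k}\mid\mathcal{F}_k].
\]
Hence the martingale differences $D_k^{(n)} := M_k^{(n)} - M_{k-1}^{(n)}$ split into an increment $Q_k - Q_{k-1}$, a deterministic drift correction $\E^\mu Q_{n-k} - \E^\mu Q_{n-k+1}$, and a defect correction $\E^\mu[\Psi_{k,n-k}\mid\mathcal{F}_k] - \E^\mu[\Psi_{k-1,n-k+1}\mid\mathcal{F}_{k-1}]$, and telescope to $Q_n - \E^\mu Q_n$. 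The $L^2$ hypotheses provide uniform control on $\E^\mu[(D_k^{(n)})^2]$. One then applies a martingale CLT (e.g.\ the Brown--McLeish theorem), which requires verifying a Lindeberg-type condition (automatic from uniform $L^2$ control on the triangular array) and convergence in probability of $\frac{1}{n}\sum_{k=1}^n \E^\mu[(D_k^{(n)})^2 \mid \mathcal{F}_{k-1}]$ to some constant $\sigma^2 \ge 0$. The variance limit would be extracted by isolating the stationary part $Q_k - Q_{k-1}$, which is a functional of the shift $\theta$, and invoking the Birkhoff ergodic theorem on $(\Omega,\mu^{\Z_+})$.

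The main obstacle is the defect correction: although each $\Psi_{k,n-k}$ is bounded in $L^2$, these conditional differences depend on the horizon $n$ and are not obviously stationary, so bounding their cumulative contribution by $o(\sqrt{n})$ in $L^2$ is the delicate point. The natural approach is to truncate $\Psi_{k,n-k}$ so that it depends only on a moderate window of coordinates around time $k$, controlling the truncation error by the $L^2$ defect bound applied to short tail intervals; martingale orthogonality then reduces the aggregate contribution of truncated defect increments to a sum of squared $L^2$ norms, which one would arrange to be $o(n)$ by choosing the window size appropriately. An alternative, closer to the Gou\"ezel/Mathieu--Sisto viewpoint, is to compare $Q_n$ directly to a truly additive reference cocycle built from an exponentially fast decoupling of the increments and then invoke the classical CLT for i.i.d.\ sums; in either case the second-moment defect hypothesis is precisely what allows the error in the approximation to be absorbed into the $\sqrt{n}$-normalization.
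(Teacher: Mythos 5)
Your proposal takes a genuinely different route than Mathieu--Sisto (and hence than the paper's cited argument). The paper relies on MS's dyadic block decomposition: one compares $Q_n$ to the sum $\sum_{j=0}^{\lceil n 2^{-M}\rceil - 1} Q_{2^M}\circ\theta^{j 2^M}$ of i.i.d.\ blocks, shows via the defect bound that the $L^2$ error is $o(n)$ after letting $M\to\infty$ along with $n$, and then invokes the classical CLT for i.i.d.\ sums. You instead build a Doob (backward) martingale $M_k^{(n)} = \E[Q_n\mid\mathcal{F}_k]$ and try to apply a martingale CLT. The martingale idea has precedent (e.g.\ Benoist--Quint's work on linear groups), and the expression you derive for $M_k^{(n)}$ is correct, so this is a reasonable alternative framing.

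However, two steps in your outline are not established and are not minor. First, the conditional Lindeberg condition for the Brown--McLeish theorem is \emph{not} automatic from uniform $L^2$ control of the triangular array $(D_k^{(n)})$; it requires uniform integrability of the squares $(D_k^{(n)})^2$. The summand $Q_1\circ\theta^{k-1}$ contributes an identically distributed, hence UI, family, but $\Psi_{k-1,1}$ and the horizon-dependent conditional defect $\E[\Psi_{k,n-k}\mid\mathcal{F}_k]$ are only known to be bounded in $L^2$; an $L^2$ bound alone does not give uniform integrability of the squares, so this assertion is a gap. Second, the convergence in probability of the normalized predictable quadratic variation $\frac{1}{n}\sum_k \E[(D_k^{(n)})^2\mid\mathcal{F}_{k-1}]$ to a constant is the crux of the theorem, and your proposal to handle it is a sketch of intent rather than an argument: the increment $Q_k - Q_{k-1} = Q_1\circ\theta^{k-1} - \Psi_{k-1,1}$ is not itself a stationary functional of the shift (the term $\Psi_{k-1,1}$ depends on the whole past, not on a shifted window), so Birkhoff does not apply directly even to the part you call ``stationary,'' and the horizon-dependent defect corrections $\E[\Psi_{k,n-k}\mid\mathcal{F}_k]$ have no ergodic structure at all. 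You honestly flag this as ``the delicate point,'' but the truncation/windowing device you gesture at is not specified enough to assess, and it is precisely this difficulty that Mathieu--Sisto's dyadic block scheme is designed to sidestep: by working at a coarse scale $2^M$ and then sending $M\to\infty$, the defect only needs to be small \emph{relative to the block length}, which the uniform $L^2$ defect bound guarantees, and the i.i.d.\ structure of the blocks makes the Lindeberg and variance conditions trivial. In short, the martingale route may be viable, but as written it is incomplete exactly at the two steps that carry the content of the theorem.
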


Furthermore, if we assume that $ \mathcal{Q}$ has finite $p$-th moment and satisfies $p$-th moment deviation inequality, the following theorem, see \cite[Theorem 4.9]{MathieuSisto2020}, establishes an upper bound on $p$-th moment of $Q_n - \E^{\mu}[Q_n]$.

\begin{thm}[{\cite[Theorem 4.9]{MathieuSisto2020}}] \label{thm: MS moments in the constant case}
	
	Let $\mathcal{Q}$ be a defective adapted cocycle that has a finite
	$p$-th moment and satisfies the $p$-th moment deviation inequality with respect to the
	probability measure $\mu$. Then, for any $p>1$, there exists a constant $C=C(\mu, p,\mathcal{Q})>0$ such that $\E^{\mu} | Q_n - \E^{\mu}[Q_n]|^p \leq C n^{p/2}$ for all $n\ge 1$.
\end{thm}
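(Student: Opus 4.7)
The plan is to apply the Burkholder–Davis–Gundy (BDG) inequality to the Doob martingale built from $Q_n$, after proving that its increments have $p$-th moments bounded uniformly in both the time index and the horizon $n$. Fix $n\ge 1$ and set $\mathcal{F}_k \coloneqq \sigma(X_1, \ldots, X_k)$ and $M_k \coloneqq \E^\mu[Q_n \mid \mathcal{F}_k]$, so that $M_0 = \E^\mu Q_n$, $M_n = Q_n$, and $Q_n - \E^\mu Q_n = \sum_{k=1}^n D_k$ with $D_k \coloneqq M_k - M_{k-1}$.

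First I would derive an explicit formula for $D_k$ using the cocycle identity $Q_n = Q_k + Q_{n-k}\circ \theta^k - \Psi_{k,n-k}$. Since the coordinate maps are i.i.d.\ under $\mu^{\Z_{+}}$, $Q_{n-k}\circ \theta^k$ is independent of $\mathcal{F}_k$, and hence
\[
M_k = Q_k + \E^\mu Q_{n-k} - \E^\mu\bigl[\Psi_{k,n-k} \mid \mathcal{F}_k\bigr].
\]
Subtracting the analogous formula at $k-1$ and using the cocycle identity once more to expand $Q_k - Q_{k-1} = Q_1\circ \theta^{k-1} - \Psi_{k-1,1}$ and $\E^\mu Q_{n-k} - \E^\mu Q_{n-k+1} = -\E^\mu Q_1 + \E^\mu \Psi_{n-k,1}$, one obtains
\[
D_k = \bigl(Q_1\circ\theta^{k-1} - \E^\mu Q_1\bigr) - \Psi_{k-1,1} + \E^\mu \Psi_{n-k,1} - \E^\mu[\Psi_{k,n-k}\mid\mathcal{F}_k] + \E^\mu[\Psi_{k-1,n-k+1}\mid\mathcal{F}_{k-1}].
\]

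The key technical step is the uniform bound $\sup_{k,n}\E^\mu|D_k|^p < \infty$. The first summand is controlled by the finite $p$-th moment of $Q_1$; the two unconditional defect contributions by the $p$-th moment deviation inequality; and the two conditional expectations by the same inequality together with the conditional Jensen bound $\E^\mu\bigl|\E^\mu[\Psi_{m,l}\mid \mathcal{G}]\bigr|^p \leq \E^\mu|\Psi_{m,l}|^p$. A convexity inequality of the form $|a_1+\cdots+a_5|^p \leq 5^{p-1}\sum|a_i|^p$ then yields the claimed uniform estimate. With this in hand, for $p \geq 2$ the discrete BDG inequality gives
\[
\E^\mu|Q_n - \E^\mu Q_n|^p \leq C_p\, \E^\mu\Bigl(\sum_{k=1}^n D_k^2\Bigr)^{p/2} \leq C_p\, n^{p/2-1}\sum_{k=1}^n \E^\mu |D_k|^p \leq C\, n^{p/2},
\]
where the middle inequality is the power–mean inequality $(\sum_k a_k)^{p/2} \leq n^{p/2-1}\sum_k a_k^{p/2}$, valid for $p \geq 2$. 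Specializing to $p=2$ gives $\E^\mu|Q_n-\E^\mu Q_n|^2 \leq Cn$; then for $1 < p < 2$, Lyapunov's inequality $\E^\mu|X|^p \leq (\E^\mu|X|^2)^{p/2}$ recovers the same rate $C^{p/2}n^{p/2}$.

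The main obstacle is precisely the uniform control of $\E^\mu|D_k|^p$ in $n$: the terms $\E^\mu[\Psi_{k,n-k}\mid \mathcal{F}_k]$ and $\E^\mu[\Psi_{k-1,n-k+1}\mid \mathcal{F}_{k-1}]$ involve defects whose second index grows with $n$. The assumption $\sup_{m,n}\E^\mu|\Psi_{m,n}|^p < \infty$ is exactly what absorbs this dependence; without it the sum on the right-hand side of the BDG bound would grow faster than $n^{p/2}$ and the target estimate would fail. Observe also that the argument is sensitive only to uniform bounds on the defect moments, which foreshadows the modification in Theorem~\ref{CLT acylindrically hyperbolic}: allowing those moments to grow slowly (like a power of $\log n$) inflates the right-hand side by an extra polylogarithmic factor and produces the $(\log n)^{5p}$ correction.
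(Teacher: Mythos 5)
Your Doob-martingale decomposition is exactly the argument behind \cite[Theorem 4.9]{MathieuSisto2020}: the five summands in your formula for $D_k$ (six, if one splits $Q_1\circ\theta^{k-1}-\E^\mu Q_1$ into its two parts) are precisely the terms that produce the $6^{p-1}$ convexity constant in inequality (4.13) of that paper, which is the same inequality the authors modify when proving Theorem~\ref{thm: moments_general case}. The use of conditional Jensen, the deviation inequality to absorb the $n$-dependence of the conditional defect terms, and BDG followed by the power--mean inequality for $p\ge 2$ are all correct and match the intended proof.

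The one step that is not literally justified by the stated hypotheses is the reduction of $1<p<2$ to $p=2$ via Lyapunov. For $1<p<2$, a uniform bound $\sup_k\E^\mu|D_k|^p<\infty$ does \emph{not} by itself give $\E^\mu\bigl|\sum_k D_k\bigr|^p\le C\,n^{p/2}$: the BDG bound only yields $\E^\mu\bigl(\sum_k D_k^2\bigr)^{p/2}\le\sum_k\E^\mu|D_k|^p\le Cn$ by subadditivity of $x\mapsto x^{p/2}$, and that $n$ cannot be improved to $n^{p/2}$ in general (e.g.\ for i.i.d.\ symmetric $\alpha$-stable increments with $p<\alpha<2$ one has $\E\bigl|\sum_k D_k\bigr|^p\asymp n^{p/\alpha}\gg n^{p/2}$). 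Your Lyapunov step therefore requires finite second moments of $Q_1$ and of the defect, which the theorem's hypotheses do not supply when only $p$-th moments with $p<2$ are assumed. In the paper's application this is harmless, since $\mu$ has a finite exponential moment and all moments of $Q_1$ and $\Psi_{m,n}$ exist, so the $p=2$ case is always available; but the statement for a standalone $1<p<2$ really uses the second-moment deviation inequality, and that dependence should be made explicit.
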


\subsection{Deviation inequalities with slowly growing upper bound}

In this subsection, we explain the generalization of the results obtained by Mathieu and Sisto in \cite{MathieuSisto2020} to the case where the moments of the defect $\Psi_{m,n}$ do not admit a constant bound, but grow slowly with respect to $m+n$. We believe that these facts are to large extent known to many experts in the field, see for example the proof of \cite[Theorem A]{ChawlaCLT}. However, since we did not find the precise and general statements in the literature, and we believe that they might be useful in the future works, we will state them and explain required technical modifications in the original arguments from \cite{MathieuSisto2020} below.

We start with the generalization of Theorem 4.2 from \cite{MathieuSisto2020}.
We say that the function $g: \mathbb{N} \rightarrow \mathbb{R}_{+}$ satisfies the \emph{slow growth assumption} (\textit{SG}) if it is non-decreasing and there exist a constant $K >0$ and $\epsilon >0$ such that

$$g(n) \leq K \frac{\sqrt{n}}{(\log n)^{1+ \epsilon}} \text{ for all }n\ge 2. $$

The upper bound in the above inequality is chosen for technical reasons. This is an almost optimal function that allows \cite[Theorem 2]{Hammerslet1962} to be applied to the function $h(n)=C_1\sqrt{n}g(n)$ in Item (2) in the proof of Theorem \ref{thm: MS Slow growth CLT} below.

\begin{thm} \label{thm: MS Slow growth CLT}
	Suppose  that $\mathcal{Q}$ is a non-negative DAC with a finite second moment and such that the defect 	\[
	\Psi_{m,n} := Q_{m+n} - (Q_m +  Q_{n}(\theta^m \omega))
	\]
	satisfies the inequality
	\[
	\E^{\mu}[|\Psi_{m,n}|^2] \le g(n+m) 
	\]
	for all $m,n \geq 1$, for some function $g$ that satisfies the slow growth assumption (SG).
	Then the CLT in the sense of \cite[Theorem 4.2]{MathieuSisto2020} holds for $\mathcal{Q}$.	
\end{thm}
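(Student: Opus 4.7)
The strategy is to revisit Mathieu and Sisto's proof of \cite[Theorem 4.2]{MathieuSisto2020} step by step and verify that their arguments remain valid when the uniform defect bound $\sup_{m,n}\E^\mu|\Psi_{m,n}|^2<\infty$ is relaxed to $\E^\mu|\Psi_{m,n}|^2\le g(m+n)$, at the cost of tracking additional factors of $g$ through the intermediate estimates. The slow-growth assumption (SG) is calibrated precisely so that these factors, which accumulate over the $O(\log n)$ dyadic scales appearing in Mathieu--Sisto's telescoping constructions, remain $o(\sqrt n)$ after the CLT normalization.

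First I would establish the drift $\ell=\lim \E^\mu Q_n/n$. Taking expectations in the defect identity and applying Cauchy--Schwarz yields
\[
\bigl|\E^\mu Q_{m+n}-\E^\mu Q_m-\E^\mu Q_n\bigr|\le \sqrt{g(m+n)},
\]
so that $a_n:=\E^\mu Q_n$ is almost subadditive. Setting $h(n)=C_1\sqrt n\, g(n)$, the hypothesis (SG) guarantees the summability $\sum_k h(2^k)/2^k<\infty$ required by Hammersley's theorem \cite[Theorem 2]{Hammerslet1962}, which produces $\ell=\lim a_n/n$ as in Item~(2) of the original argument. For the CLT itself, I would follow Mathieu and Sisto's Gordin-type martingale approximation: decompose $Q_n-\ell n=M_n+R_n$ with $M_n$ a martingale in the natural filtration $\sigma(X_1,\ldots,X_n)$ and $R_n$ a remainder. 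Each error estimate that was previously controlled by the constant $\sup\E^\mu|\Psi|^2$ is now controlled by $\sqrt{g(\cdot)}$ multiplied by a polylogarithmic factor; summing over the $O(\log n)$ dyadic scales and invoking (SG) shows $R_n/\sqrt n\to 0$ in probability. The martingale $M_n$ has stationary increments with finite second moments (inherited from $\E^\mu|Q_1|^2<\infty$ and the defect bound) whose conditional variances converge in probability to a constant $\sigma^2$, so the martingale CLT yields $M_n/\sqrt n\Rightarrow \mathcal N(0,\sigma^2)$, and Slutsky's lemma concludes.

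The main obstacle is not conceptual but bookkeeping-heavy: every inequality in Mathieu and Sisto's argument that relied on a uniform defect bound must be re-examined, and the contribution of $g$ traced carefully through the telescoping. The exponent $1+\epsilon$ in (SG) is sharp for this purpose. Any weaker decay, such as $g(n)\lesssim \sqrt n/\log n$, would cause the series $\sum_k h(2^k)/2^k$ fed into Hammersley's theorem to diverge, and would equally prevent the telescoped martingale remainder from being $o(\sqrt n)$. With the (SG) hypothesis in place, however, no probabilistic ingredient beyond those already used in \cite{MathieuSisto2020} is required.
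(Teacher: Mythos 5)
Your plan is essentially the same as the paper's: re-run the Mathieu--Sisto argument for \cite[Theorem 4.2]{MathieuSisto2020} and verify that replacing the uniform defect bound by $g(m+n)$ only inflates the intermediate estimates by factors that, thanks to (SG), stay summable over the $O(\log n)$ dyadic scales. The paper's proof is more explicit, giving a four-item list of modified inequalities: (i) the a priori bound $\mathbb{V}^\mu(Q_n)\le Cng(n)$ replacing the linear variance bound; (ii) existence of $\sigma^2=\lim \mathbb{V}^\mu[Q_n]/n$ via an almost-subadditivity argument (Hammersley with $h(n)=C_1\sqrt n\,g(n)$) applied to the \emph{variance} sequence; (iii) the modified dyadic blocking estimates replacing (4.10)--(4.12) of \cite{MathieuSisto2020}, which is the precise form of your ``remainder $R_n$ is $o(\sqrt n)$''; and (iv) a rate $|\E^\mu Q_n/n-\ell|\lesssim n^{-3/4}$, obtained from Hammersley with $h(x)=C_4 x^{1/4}$, to justify replacing $\E^\mu Q_n$ by $\ell n$. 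Your proposal correctly identifies the drift-existence step, the dyadic decomposition, and the role of (SG), and your sharpness remark matches the paper's footnote that $h(n)=C_1\sqrt n\,g(n)$ is the nearly optimal choice for Hammersley.

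Two small imprecisions worth noting. First, you conflate the two Hammersley applications: the defect of the means $a_n=\E^\mu Q_n$ is only $O(\sqrt{g(m+n)})$ (as you correctly derive via Cauchy--Schwarz), so the natural choice there is $h=\sqrt g$ (the paper uses $C_4 x^{1/4}$); the $h(n)=C_1\sqrt n\,g(n)$ you quote is the one the paper needs for the \emph{variance} sequence in item (ii), where the defect is $O(\sqrt{(n+m)g(n+m)})$. Using the larger $h$ for the means is not wrong---Hammersley still applies and gives existence of $\ell$---but it yields a weaker rate, and the rate matters for step (iv); you do not address this. Second, your framing of the key CLT step as a Gordin martingale decomposition plus the martingale CLT is a reasonable gloss, but Mathieu--Sisto's argument is really an i.i.d.\ block approximation: $Q_n$ is compared to $\sum_{j}Q_{2^M}\circ\theta^{j2^M}$ at scale $M$, the difference is shown to have variance $o(n)$ after letting $M\to\infty$, and the classical CLT is invoked for the i.i.d.\ blocks. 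The conclusion is the same, but the bookkeeping you would actually have to do follows the latter scheme. Neither of these is a fatal gap; they would need to be tightened to turn the proposal into a complete proof.
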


\begin{proof}
	The general strategy in the proof of \cite[Theorem 4.2]{MathieuSisto2020}  works in our case as well, however, one needs to do the following technical modifications.

	\begin{enumerate}
		\item Instead of the linear upper bound on the variance of $ Q_n$ obtained in \cite[Theorem 4.4]{MathieuSisto2020} one can first derive the following weaker inequality that holds for every $n \geq 2$
		\begin{align}\label{var_weak}
			\mathbb{V}^{\mu}(Q_n) \leq C n g(n)
		\end{align}
		for some constant $C$ that depends only on $\mu$, $g$ and $\E^{\mu}|Q_1|^2$. The only change relative to the original argument in \cite[Theorem 4.4]{MathieuSisto2020} is to replace $\tau_2$ with $g(n)$. 
		\item Next, we describe the  modifications in the proof of Theorem 4.1 in \cite{MathieuSisto2020}. The assumption (SG) guarantees that the function $h (n)=C_1  \sqrt{n}  g(n)$ satisfies the conditions in \cite[Theorem 2]{Hammerslet1962} for any constant $C_1 >0$. Hence,  the conclusion of Lemma 4.5 still holds if we assume that the sequence $a_n, n \geq 1$ satisfies the inequality $$a_{m+n} \leq a_m+a_n+C_2 \sqrt{n+m}  g(n+m)$$
		for some large enough constant $C_2$. Notice that the inequality
		
		\begin{align*}
			|\mathbb{V}^{\mu}[Q_{n+m}] -\mathbb{V}^{\mu}[Q_{m}] -\mathbb{V}^{\mu}[Q_{n}]|  \\ \leq  \mathbb{V}^{\mu}[\Psi_{m,n}]+2 \sqrt{\mathbb{V}^{\mu}[Q_{m}] +\mathbb{V}^{\mu}[Q_{n}]}\sqrt{\mathbb{V}^{\mu}[\Psi_{m,n}]}
		\end{align*} 
		obtained in the proof of Theorem 4.1 is still valid in our setting. Moreover, since $g$ is non-decreasing, Inequality \eqref{var_weak} implies that $\mathbb{V}^{\mu}[Q_{m}] +\mathbb{V}^{\mu}[Q_{n}] \leq 2 C(n+m) g(n+m)$. 
		
		Since we also have the inequalities $\E^{\mu} |\Psi_{m,n}|^2 \leq g(n+m)$ and $ \mathbb{V}^{\mu}[\Psi_{m,n}]\leq \E^{\mu} |\Psi_{m,n}|^2$, it follows that for a sufficiently large constant $C_2>0$, the inequality
		
		$$\mathbb{V}^{\mu}[\Psi_{m,n}]+2 \sqrt{\mathbb{V}^{\mu}[Q_{m}] +\mathbb{V}^{\mu}[Q_{n}]}\sqrt{\mathbb{V}^{\mu}[\Psi_{m,n}]} \leq C_2 \sqrt{n+m} g(n+m) $$
		
		holds for all $m,n \geq 1$. It remains to apply Lemma 4.5 and conclude that the limit $ \sigma^2 (\mu; \mathcal{Q})=\lim_{ n \rightarrow \infty} \frac{\mathbb{V}^{\mu}[Q_{n}]}{n} $
		exists and is finite.
		
		\item The statement of Lemma 4.6 from \cite{MathieuSisto2020} remains correct, but the estimates in the proof need to be modified as follows.

		The inequality (4.10) on page 977 is replaced with $$ \mathbb{V}^{\mu}[\sum_{j=0}^m \gamma_j] \leq (\log n)^2 g(n). $$ 
		
		The inequality (4.11) on page 977 is replaced with
		$$\mathbb{V}^{\mu}[R_M] \leq 2^{2M+1}(\chi_2+M^2 g(n)) .$$
		
		The inequality (4.12) on page 978 is replaced with 
		\begin{align*}
			\mathbb{V}^{\mu}\Big[\sum_{i=M+1} ^m \sum_{j=0}^{\lceil n2^{-i}\rceil-1} \Psi_{2^{i-1}, 2^{i-1}} \circ \theta^{j2^{i}}\Big] \leq n \Big(\sum_{i=M+1}^{m} \sqrt{\frac {g(2^i)} {2^{i}}} \Big)^2 \\ \leq 
			n\Big(\sum_{i=M+1}^{\infty} \sqrt{\frac {g(2^i)} {2^{i}}} \Big)^2.
		\end{align*}
		
		Notice that the slow growth assumption (SG) implies that the series $$ \sum_{i=M+1}^{\infty} \sqrt{\frac {g(2^i)} {2^{i}}} $$
		converges.
		As a result, one obtains the inequality
		\begin{align*}
			\frac{1}{n}\mathbb{V}^{\mu}\Big[Q_n - \sum_{j=0}^{\lceil n 2^{-M}\rceil-1} Q_{2^M}\circ \theta^{j2^M} \Big] \leq
			\\ \frac{1}{n}\Big( \log n \sqrt{g(n)}+ 2^{M+1}\sqrt{\chi_2+M^2 g(n)}+ \sqrt{n}\Big(\sum_{i=M+1}^{\infty} \sqrt{\frac {g(2^i)} {2^{i}}} \Big) \Big)^2.
		\end{align*}
		Afterwards, the proof is completed as in \cite{MathieuSisto2020}. Notice that the slow growth assumption (SG) implies the inequality
		$$ \frac{(\log n) \sqrt{g(n)}}{\sqrt{n}} \leq \sqrt{K n^{-\frac{1}{2}} (\log n)^{1- \epsilon}}, $$
		
		so when $ n \rightarrow \infty$,  we have $$\lim_{n \rightarrow \infty } \frac{(\log n) \sqrt{g(n)}}{\sqrt{n}} =0.$$ 
		
		Similarly, $$ \lim_{n \rightarrow \infty } \frac{2^{M+1}\sqrt{\chi_2+M^2 g(n)}} {\sqrt{n}} =0.$$
		
		Finally, one takes $M \rightarrow  \infty$ and uses convergence of the series  $\sum_{i=1}^{\infty} \sqrt{\frac {g(2^i)} {2^{i}}} $  to complete the proof of Lemma 4.6.
		
		\item The remainder of the proof can be completed as in \cite{MathieuSisto2020}, but with a different argument showing that one can replace $\E^{\mu}[Q_n]$ with $l(\mu, \mathcal{Q})n$, where $l(\mu, \mathcal{Q})$ is the rate of escape of $\mathcal{Q}$. 
		Instead of using Theorem 3.4 in \cite{MathieuSisto2020},
		we are going to prove that, for all $n \geq 1,$ we have
		
		$$\left| \frac{1}{n}\E^{\mu}[Q_n] -l(\mu, \mathcal{Q}) \right| \leq C_3 n^{-3/4} $$
		
		for some sufficiently large constant $C_3$.
		
		Notice that the slow growth assumption (SG) guarantees that there exists a constant $C_4>0$ such that 
		$$\E^{\mu} |\Psi_{m,n}| \leq C_4 (m+n)^{1/4} \text{ for all }m,n\geq 1. $$
		
		Therefore, we may apply  \cite[Theorem 2]{Hammerslet1962} to $h(x)=C_4x^{1/4}$ and the sequence $a_n= \E^{\mu}Q_n$ to conclude that the asymptotic rate of escape $l(\mu, \mathcal{Q})$ is a non-negative real number. Moreover, the application of this theorem to the sequences $a_n= \E^{\mu}[Q_n]$ and $b_n= - \E^{\mu}[Q_n]$ yields 
		$$-\frac{1}{n}\E^{\mu}[Q_n] \geq - l(\mu, \mathcal{Q})+C_4n^{-3/4} -C_4 \frac{32}{3}\Big(2n \Big)^{-3/4} $$
		and 
		
		$$\frac{1}{n}\E^{\mu}[Q_n] \geq  l(\mu, \mathcal{Q})+C_4 n^{-3/4} -C_4 \frac{32}{3}\Big(2n\Big)^{-3/4}.
		$$
		Here we also used the trivial bound $$\sum_{x=2n}^{\infty}\frac{1}{x^{\frac{3}{4}}(x+1)} \leq 2 \int_{2n}^{\infty}x^{-\frac{7}{4}} dx= \frac{8}{3}\Big(2n\Big)^{-3/4}$$
		Therefore, we have $$| \frac{1}{n}\E^{\mu}[Q_n] -l(\mu, \mathcal{Q})| \leq C_3 n^{-3/4}, \text{ where } C_3=C_4\Big(\frac{32}{3}2^{-3/4}-1\Big).$$
	\end{enumerate}
	
\end{proof}

Theorem \ref{thm: MS moments in the constant case} can be restated as follows.

\begin{thm}\label{thm: moments_general case}
	Let $\mathcal{Q}$ be a defective adapted cocycle that has a finite
	$p$-th moment and  satisfies the inequality $$\E^{\mu}|\Psi_{m,n}|^p \leq g(m+n)$$ for all $m,n \geq 0$ for some non-decreasing function $g$. Then, for any $p>1$ there exists a constant $C$ that depends on $p$, $\mu$ and $\mathcal{Q}$, such that:
	
	$$\E^{\mu} | Q_n - \E^{\mu}{Q_n}|^p \leq C n^{p/2}g(n)$$
\end{thm}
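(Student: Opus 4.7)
The plan is to revisit the proof of Theorem \ref{thm: MS moments in the constant case} (which is \cite[Theorem 4.9]{MathieuSisto2020}) and track how the $L^p$-bounds on the defects propagate through each estimate. The key observation is that every defect term $\Psi_{m',n'}$ appearing in the MS decomposition of $Q_n$ satisfies $m'+n' \leq n$, so by monotonicity of $g$ each such term is bounded in $L^p$ by $g(n)^{1/p}$; the MS estimate $Cn^{p/2}$ from the uniformly-bounded-defect case is thereby promoted to $Cn^{p/2}g(n)$ in our setting.

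Concretely, I would follow the MS dyadic decomposition. Iterating the cocycle identity $Q_{m+n} = Q_m + Q_n\circ\theta^m - \Psi_{m,n}$ at a scale $2^M \leq n$, one obtains
$$Q_n - \E^{\mu}[Q_n] = \sum_{j=0}^{k-1}\bigl(Q_{2^M}\circ\theta^{j 2^M} - \E^{\mu}[Q_{2^M}]\bigr) - \sum_{j=1}^{k-1}\bigl(\Psi_{j 2^M, 2^M} - \E^{\mu}[\Psi_{j 2^M, 2^M}]\bigr) + B_n,$$
where $k = \lfloor n/2^M\rfloor$ and $B_n$ accounts for the boundary effect at the final incomplete block. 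The first sum is i.i.d.\ centered and, by the Marcinkiewicz--Zygmund / Rosenthal inequality, has $L^p$-norm at most $C_p\sqrt{k}\,\|Q_{2^M} - \E^{\mu}Q_{2^M}\|_p$. The second sum is handled via a martingale / BDG-type argument exploiting the natural filtration $\mathcal{F}_i = \sigma(X_1,\ldots,X_i)$, producing a bound of order $\sqrt{k}\,\max_j\|\Psi_{j 2^M, 2^M}\|_p \leq C_p'\sqrt{k}\,g(n)^{1/p}$, by monotonicity of $g$.

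Combining these two contributions and optimizing over $M$ exactly as in MS yields $\|Q_n - \E^{\mu}Q_n\|_p \leq C\sqrt{n}\,g(n)^{1/p}$, which raised to the $p$-th power gives the claimed inequality. The only technical change relative to \cite[Theorem 4.9]{MathieuSisto2020} is the replacement of the uniform defect bound $\tau_p^{1/p}$ by $g(n)^{1/p}$, which is valid since $g$ is non-decreasing and every defect encountered has index sum at most $n$. The main potential obstacle --- that iterating the decomposition might cause the factor $g$ to accumulate across scales --- is avoided because the uniform-in-scale bound $g(n)^{1/p}$ enters the Rosenthal/martingale estimate exactly once. The small-$n$ regime is handled separately via the triangle inequality and finiteness of $\E^{\mu}|Q_1|^p$, at the cost of adjusting the constant $C$.
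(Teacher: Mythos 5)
Your guiding observation is exactly the one the paper uses: since every defect $\Psi_{m',n'}$ that arises in the argument has $m'+n'\le n$, the monotonicity of $g$ upgrades the uniform bound $\tau_p$ of \cite[Theorem~4.9]{MathieuSisto2020} to $g(n)$, and that is where the extra factor in the conclusion comes from. The paper's proof is a one-line modification: it keeps the Mathieu--Sisto argument verbatim and merely replaces their inequality (4.13) by $\E^{\mu}\left[|Y_j|^p\right]\le 6^{p-1}\left(2\chi(\mu;\mathcal{Q})+4g(n)\right)$.

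However, the concrete mechanism you propose for realizing this has a gap. You write $Q_n$ via the single-scale cocycle telescoping $Q_{k2^M}=\sum_{j=0}^{k-1}Q_{2^M}\circ\theta^{j2^M}-\sum_{j=1}^{k-1}\Psi_{j2^M,2^M}$ and then claim the centered defect sum has $L^p$-norm of order $\sqrt{k}\,g(n)^{1/p}$ by a ``martingale / BDG-type argument.'' But $\Psi_{j2^M,2^M}=Q_{j2^M}+Q_{2^M}\circ\theta^{j2^M}-Q_{(j+1)2^M}$ depends on the \emph{entire} history $X_1,\ldots,X_{(j+1)2^M}$ (through the term $Q_{j2^M}$), not just on the $j$-th increment block. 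Consequently the sequence $\bigl(\Psi_{j2^M,2^M}-\E^{\mu}\Psi_{j2^M,2^M}\bigr)_j$ is neither independent nor a martingale difference sequence with respect to $\mathcal{F}_i=\sigma(X_1,\ldots,X_i)$: one has no reason to expect $\E^{\mu}\left[\Psi_{j2^M,2^M}\mid\mathcal{F}_{j2^M}\right]$ to be deterministic. Without that structure, Burkholder--Davis--Gundy or Rosenthal give you nothing better than the triangle-inequality bound $k\,g(n)^{1/p}$, which is too weak. (This is precisely why the Mathieu--Sisto CLT argument, which the paper modifies in Theorem~\ref{thm: MS Slow growth CLT}, uses the \emph{halved} multi-scale defects $\Psi_{2^{i-1},2^{i-1}}\circ\theta^{j2^i}$: those \emph{are} independent across $j$ within each scale $i$.)

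The mechanism Mathieu--Sisto actually use for the moment bound, and which the paper keeps, is the Doob martingale of $Q_n$ itself: write $Q_n-\E^{\mu}Q_n=\sum_{j=1}^n Y_j$ with $Y_j=\E^{\mu}\left[Q_n\mid\mathcal{F}_j\right]-\E^{\mu}\left[Q_n\mid\mathcal{F}_{j-1}\right]$, which \emph{is} a genuine martingale difference sequence. Each $Y_j$ is then controlled via the cocycle identity by a bounded number of copies of $Q_1$ and $\Psi_{m',n'}$ with $m'+n'\le n$, giving the modified inequality (4.13) with $g(n)$ in place of $\tau_p$, and BDG applied to the \emph{actual} martingale $\sum Y_j$ then produces $C\,n^{p/2}\,g(n)$. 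If you rework your write-up around this Doob-martingale decomposition rather than around the telescoped cocycle sum, the rest of your outline goes through.
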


\begin{proof}
	The original proof of \cite[Theorem 4.9]{MathieuSisto2020} still works if the inequality (4.13) on page 980 is replaced by  \[ \E^{\mu}\left[|Y_j|^p \right]\leq 6^{p-1}(2\chi(\mu; \mathcal{Q})+4g(n)).\]
\end{proof}
In Section \ref{section: final proof CLT} we will apply the results from this subsection to the length cocycle in the case when $G$ is a lamplighter group with an acylindrically hyperbolic base group $H$, $d$ is a standard metric on $G$, and $\mu$ has finite exponential moment and projects onto a non-elementary probability measure on $H$. 
In particular, we will show that in this case the moments of the defect are bounded by polynomials in $\log(m+n)$. 
More precisely, we will prove that for any $p \geq 1$, there exists a constant $K_p >0$ such that
\[
\E^{\mu}[(d(\id_G,Z_m)+d(\id_G,Z_m^{-1}Z_{m+n}) -d(\id_G,Z_{n+m}))^p]=\E^{\mu}[|\Psi_{m,n}|^p]\le K_p \log(n+m)^{5p} , 
\]
for all $n,m \ge 0.$

\section{Deterministic upper bounds on the defect of the TSP}\label{section: combinatorial}

The main goal of Sections \ref{section: combinatorial} and \ref{section: final proof CLT} is to establish the upper bounds on the moments of the defect of the length cocycle in the case of the acylindrically hyperbolic base group. Our argument will be divided into two parts. The first part is a geometric and combinatorial argument where we obtain deterministic bounds on the defect of the TSP. This is the main focus of Section \ref{section: combinatorial}. 
In the second part of the proof, covered in Section \ref{section: final proof CLT}, we show that, for a generic trajectory of the random walk, the deterministic bounds obtained in the first step are sufficiently sharp to ensure that the desired inequalities on the moments of the defect are true.

\begin{lem}\label{comb_lemma_1d}
	Let $A,B,C$ be three distinct points in the geodesic metric space $(X,d)$, and let $\gamma$ be a geodesic connecting $A$ and $C$. Let $R=d(A,B)$ and assume that $D>0 $ is a number such that $d(B, \gamma) \leq D$,  $R \geq 4D$, and $d(A,C) \geq R+ 4D$. Consider two finite subsets of $X$, $L_1$ and $L_2$, such that the following two conditions hold. 
	\begin{enumerate}
		\item The set $L_1 \cup L_2$ belongs to the $D$-neighborhood of $\gamma$.
		\item \label{separation}For any $x \in L_1$ we have $d(A,x) \leq R+ 4D$ and for any $y \in L_2$ we have $d(A,y) \geq R-4D $.
	\end{enumerate}
	Let $N$ denote the number of points $ x \in L_1 \cup  L_2$ such that $ R-4D  \leq d(A, x) \leq R+4D$. Then for any set $ L_3$ such that $L_1 \triangle L_2  \subseteq L_3 \subseteq L_1 \cup L_2$ we have
	
	$$ 0 \leq \mathrm{TSP}( A, L_1, B) +\mathrm{TSP} (B, L_2, C) - \mathrm{TSP} (A, L_3, C) \leq 24(N+1) D.$$
	
\end{lem}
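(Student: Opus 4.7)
The plan is to prove the two inequalities separately. The lower bound is immediate from concatenation: given optimal tours $\beta_1^\ast$ for $\mathrm{TSP}(A,L_1,B)$ and $\beta_2^\ast$ for $\mathrm{TSP}(B,L_2,C)$, the concatenation $\beta_1^\ast \cdot \beta_2^\ast$ is a path from $A$ to $C$ that visits $L_1 \cup L_2 \supseteq L_3$, so by minimality $\mathrm{TSP}(A,L_3,C) \le |\beta_1^\ast| + |\beta_2^\ast|$.

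For the upper bound I would take an optimal path $\alpha^\ast$ for $\mathrm{TSP}(A,L_3,C)$ and surgically modify it into two paths $\beta_1$ from $A$ to $B$ visiting $L_1$ and $\beta_2$ from $B$ to $C$ visiting $L_2$, arranging that $|\beta_1| + |\beta_2| \le |\alpha^\ast| + 24(N+1)D$. It is convenient to partition $L_1 \cup L_2$ by distance to $A$ into the \emph{left part} $L_{\mathrm{left}} = \{x : d(A,x) < R-4D\}$, the \emph{right part} $L_{\mathrm{right}} = \{x : d(A,x) > R+4D\}$, and the \emph{middle part} $L_{\mathrm{mid}}$, of cardinality $N$, lying in the annulus $R-4D \le d(A,\cdot) \le R+4D$. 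The separation hypothesis gives $L_{\mathrm{left}} \subseteq L_1 \setminus L_2$ and $L_{\mathrm{right}} \subseteq L_2 \setminus L_1$, and $L_1 \triangle L_2 \subseteq L_3$ then forces $L_{\mathrm{left}}, L_{\mathrm{right}} \subseteq L_3$; consequently any middle point missing from $L_3$ must lie in $L_1 \cap L_2$ and has to appear in both $\beta_1$ and $\beta_2$.

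The key geometric input comes from projection to $\gamma$. For each $p$ with $d(p,\gamma) \le D$, pick $\pi(p) \in \pi_\gamma(p)$ and set $t(p) = d(A,\pi(p))$; then $|t(p) - d(A,p)| \le D$, and for any two such points $p, q$ the triangle inequality gives $|t(p)-t(q)| - 2D \le d(p,q) \le |t(p)-t(q)| + 2D$. In particular $t(B) \in [R-D, R+D]$, every $x \in L_{\mathrm{mid}}$ satisfies $d(x,B) \le 8D$, and any two middle points lie within $12D$ of each other.

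The construction then proceeds case by case. If $\alpha^\ast$ has a node $Q \in L_{\mathrm{mid}}$, I cut at $Q$ and set $\beta_1 = \alpha^\ast|_{[A,Q]} \cdot (Q \to B)$, $\beta_2 = (B \to Q) \cdot \alpha^\ast|_{[Q,C]}$, paying at most $2 \cdot 8D = 16D$ for the cut. Otherwise $L_3 \cap L_{\mathrm{mid}} = \emptyset$ and $L_{\mathrm{mid}} \subseteq L_1 \cap L_2$; I pick a crossing edge $P_iP_{i+1}$ of $\alpha^\ast$ with $t(P_i) < t(B) < t(P_{i+1})$ and replace it by the detour $P_i \to B \to P_{i+1}$, whose excess is controlled by the projection estimate: $[|t(P_i)-t(B)| + 2D] + [|t(B)-t(P_{i+1})| + 2D] - [|t(P_{i+1})-t(P_i)| - 2D] = 6D$. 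Each $\beta_j$ is then enriched near $B$ by a chain through the middle points it still lacks, traversed in order of $t$: such a chain of $k_j \le N$ points, inserted either as a loop $B \to x_1 \to \cdots \to x_{k_j} \to B$ or spliced onto the final $Q \to B$ segment, adds at most $12Dk_j + 8D$. Since $k_1 + k_2 \le 2N$ across the two sides, the enrichment contributes at most $24DN + O(D)$, and combining with the cut yields a total excess at most $24(N+1)D$.

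The main obstacle is that $\alpha^\ast$ may \emph{interleave} visits to $L_{\mathrm{left}}$ and $L_{\mathrm{right}}$, so a naive cut could strand an $L_{\mathrm{left}}$ node in the $\beta_2$-half (where fixing it would cost $O(R)$ rather than $O(D)$). The key observation is that every crossing edge of $\alpha^\ast$ has length at least $8D$, so any interleaving is visible in the total length of $\alpha^\ast$; combined with the optimality of $\alpha^\ast$, this lets one either find a cut point strictly between the last $L_{\mathrm{left}}$-visit and the first subsequent $L_{\mathrm{right}}$-visit, or else reroute $\alpha^\ast$ locally by an amount bounded by a constant times $ND$ so that such a cut exists. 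Making this reordering argument quantitative and verifying that the total cost remains within the $24(N+1)D$ budget is the technical heart of the proof.
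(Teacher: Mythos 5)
Your lower bound argument is correct and matches the paper's. For the upper bound, your setup — partitioning $L_1\cup L_2$ by distance to $A$ into left/middle/right, using projections to $\gamma$, the estimate $d(P,Q)\ge |t(P)-t(Q)|-2D$, and the observation that crossing edges (left $\to$ right) have length $>8D$ — is all sound and close to what the paper uses. But there is a genuine gap, and it is exactly the one you flag at the end: your construction cuts $\alpha^\ast$ at a \emph{single} place (one middle node or one crossing edge) and you acknowledge that interleaving of left and right visits may strand nodes on the wrong side, deferring the fix to an unproven ``reordering'' step. That step is not a technicality; it is the heart of the lemma, and as sketched it is not clear it stays within a budget of $O(ND)$.

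The paper resolves this without any single cut or reordering. It fixes two auxiliary points $B_1, B_2$ on $\gamma$ at parameters $R-4D$ and $R+4D$, and then processes \emph{every} segment of $\alpha^\ast$ that touches the middle or crosses it: each ``leap'' $PQ$ with $P$ on the left and $Q$ on the right is replaced by the pair $PB_1$, $B_2Q$, and each ``step'' $PQ$ with $Q$ in the middle is replaced by $PB_1$ (or $PB_2$). The crucial inequality, proved by projecting to $\gamma$, is that for a leap $|PB_1|+|B_2Q|\le |PQ|$, i.e.\ rerouting a crossing through $B_1,B_2$ is \emph{free}; and for a step $|PB_i|\le |PQ|+6D$, with at most $2N$ steps, giving $12ND$. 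This produces two path families, one living entirely in the left region ending at $B_1$, one in the right region starting at $B_2$, with no possibility of stranding, regardless of how badly $\alpha^\ast$ interleaves. The middle points and $B$ itself are then visited by a short connector from $B_1$ to $B_2$ of length at most $12D(N+2)$, giving $24(N+1)D$ in total. So the idea you're missing is: don't look for a good single cut — reroute \emph{all} crossings simultaneously and notice that the leap-splitting inequality makes the wholesale rerouting cost zero.
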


\begin{figure}
	\centering
	\includegraphics[]{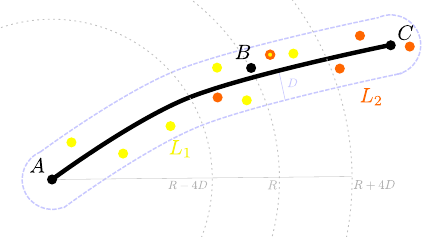}
	\caption{Hypotheses of Lemma \ref{comb_lemma_1d}.}
	\label{pic: Lemma 4.1}
\end{figure}
Let us briefly explain the setting of Lemma \ref{comb_lemma_1d}, illustrated by Figure
\ref{pic: Lemma 4.1}. The parameter $D$ controls the distances from  the points to $\gamma$ and the diameter of $L_1 \cap L_2$. The parameter $R$ controls the separation between $L_1 \cup \{A\}$ and $L_2 \cup \{C\}$ - most of the points in $L_1$ are to the left of $B$ and most of the points from $L_2$ are to the right of $B$.  Under these hypotheses, we will show that for a large enough $L_3$, we can restructure the solution of $\mathrm{TSP}(A, L_3, C)$, at a cost controlled by $N$, to  obtain a path from $A$ to $C$ that first visits all points in $L_1$, then goes through $B$ and visits every point in $L_2$.

\begin{proof}[Proof of Lemma~\ref{comb_lemma_1d}]
	Notice that for fixed endpoints $P$ and $Q$, the $\mathrm{TSP}(P,S,Q)$ is a non-decreasing function of the finite subset $S \subset X$ (with the subsets ordered by inclusion). In particular, we have $\mathrm{TSP}(A,L_3,C) \leq \mathrm{TSP}(A,L_1 \cup L_2, C)$. Then the first inequality follows from the fact that the concatenation of any two solutions of $\mathrm{TSP}( A, L_1, B)$ and $\mathrm{TSP}(B,L_2,C)$ at $B$ produces a path of length $\mathrm{TSP}( A, L_1, B) +\mathrm{TSP} (B, L_2, C)$ that starts at $A$, visits every point in $L_1 \cup L_2$ and ends at $C$, and hence has the length at least $\mathrm{TSP}(A,L_1 \cup L_2, C)$.
	
	Similarly, to prove the second inequality, it suffices to consider the case where $L_3= L_1 \Delta L_2$. The plan of the proof in this case is as follows. We will denote by $N_D(\gamma)$ the $D$-neighborhood of $\gamma$. Let us partition $N_D(\gamma)$ into three sets, the initial part $\mathcal{I}=N_D(\gamma) \cap B_{R-4D}(A)$, the middle part $\mathcal{M}=N_D(\gamma) \cap \{ x \in X |\, R-4D < d(A,x) < R+4D\}$ and the terminal part $\mathcal{T}=N_D(\gamma) \cap \{ x \in X |\, d(A,x) \geq R+4D\}$. Let $\alpha$ be any solution of $\mathrm{TSP}(A, L_3, C)$. We are going to show that after removing all geodesic segments of $\alpha$ that connect a node in $\mathcal{I}$ with a node in $\mathcal{T}$ or have at least one node in $\mathcal{M}$, the remaining subpaths of $\alpha$ could be combined with a sufficiently small family of new geodesic segments to obtain a path $\beta$ with the following properties. The path $\beta$ starts at $A$, visits every point in $L_1$, then goes through $B$, visits every point in $L_2$ afterwards, and finishes at $C$. By the definition of $\mathrm{TSP}$, the length of $\beta$ is at least $\mathrm{TSP}( A, L_1, B) +\mathrm{TSP} (B, L_2, C)$, and therefore, we only need to ensure that the total length of the new segments that we added does not exceed the total length of the segments that we removed plus $24(N+1)D$.
	
	We start with two elementary geometric inequalities which we will use to control the length of $\beta$. Let $B_1$ and $B_2$ be the unique points on the geodesic $\gamma$ such that $d(A,B_1)=R-4D$, and $d(A, B_2) =R+4D$. 
	
	\begin{claim} \label{leap_projection}
		Let $P$ be a point in $\mathcal{I}$ and $Q$ be a point in $\mathcal{T}$.  Then the following inequality holds: $$|PB_1|+|B_2Q| \leq |PQ|.$$
	\end{claim}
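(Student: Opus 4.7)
My plan is to project $P$ and $Q$ onto $\gamma$ and then reduce the claim to an elementary inequality in two real variables. Pick $P' \in \pi_\gamma(P)$ and $Q' \in \pi_\gamma(Q)$; since $P \in \mathcal{I} \subset N_D(\gamma)$ and $Q \in \mathcal{T} \subset N_D(\gamma)$, we have $|PP'| \leq D$ and $|QQ'| \leq D$. Combining this with the constraints $|AP| \leq R-4D$ and $|AQ| \geq R+4D$ coming from the definitions of $\mathcal{I}$ and $\mathcal{T}$, the triangle inequality yields the anchor estimates
\[
|AP'| \leq R-3D \quad \text{and} \quad |AQ'| \geq R+3D.
\]
In particular $P'$ precedes $Q'$ along $\gamma$, and since $\gamma$ is a geodesic from $A$ to $C$ containing both $P', Q'$ as well as $B_1, B_2$, we have $|P'Q'| = |AQ'|-|AP'|$. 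A second application of the triangle inequality then gives the lower bound
\[
|PQ| \geq |P'Q'| - |PP'| - |QQ'| \geq |AQ'|-|AP'|-2D.
\]

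For the upper bound on $|PB_1|+|B_2Q|$, I use that $B_1, B_2 \in \gamma$ lie at arclength distances $R-4D$ and $R+4D$ from $A$, so
\[
|PB_1| \leq D + \bigl||AP'|-(R-4D)\bigr|, \qquad |B_2Q| \leq D + \bigl||AQ'|-(R+4D)\bigr|.
\]
Now I would introduce the parameters
\[
x := |AP'|-(R-4D), \qquad y := (R+4D)-|AQ'|,
\]
both of which are at most $D$ by the earlier anchor estimates. One readily checks that $|AQ'|-|AP'| = 8D-x-y$, and the desired inequality $|PB_1|+|B_2Q| \leq |PQ|$ reduces to
\[
|x|+|y| \leq 4D - x - y,
\]
equivalently $\max(x,0)+\max(y,0) \leq 2D$, which is immediate from $x \leq D$ and $y \leq D$.

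The meat of the proof is the projection bookkeeping at the start; once done, the final inequality is essentially trivial. The only mild subtlety is that $P'$ may lie slightly past $B_1$ (when $x>0$) and $Q'$ slightly before $B_2$ (when $y>0$), so a naive approach would split into four cases according to the signs of $x$ and $y$. The parametrization above handles all configurations uniformly, which I expect to be cleaner than a direct case analysis and is the main conceptual step.
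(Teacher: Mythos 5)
Your proof is correct, and it uses the same core idea as the paper: project $P$ and $Q$ onto $\gamma$ and work with arclength coordinates along the geodesic. The bookkeeping, however, is organized differently. The paper builds the explicit auxiliary piecewise-geodesic path $\theta = P\overline{P} \cup \overline{P}B_1 \cup B_1B_2 \cup B_2\overline{Q} \cup \overline{Q}Q$, shows $|\theta| \leq |PQ| + 8D$ via a short case analysis (according to whether $\overline{P}B_1$ and $B_2\overline{Q}$ are subsegments of $\overline{P}\overline{Q}$), and then subtracts $|B_1B_2| = 8D$ to conclude. Your signed parametrization $x = |AP'| - (R-4D)$, $y = (R+4D) - |AQ'|$ collapses the paper's four configurations into the single scalar inequality $\max(x,0) + \max(y,0) \leq 2D$, which follows immediately from $x, y \leq D$; this eliminates the case analysis and is arguably cleaner as a self-contained argument. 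The tradeoff is that the paper's path-$\theta$ template is deliberately structured to be reused, with minor modifications, in the proof of the companion Claim \ref{step_projection}, whereas your parametrization is tailored to this claim alone. One identity you use implicitly and correctly — $|P'B_1| = \bigl||AP'| - (R-4D)\bigr|$ — relies on $B_1$ lying on $\gamma$, which the ambient lemma guarantees via the hypothesis $d(A,C) \geq R + 4D$; it would be worth one clause to flag that.
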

	
	\begin{figure}
		\centering
		\includegraphics[]{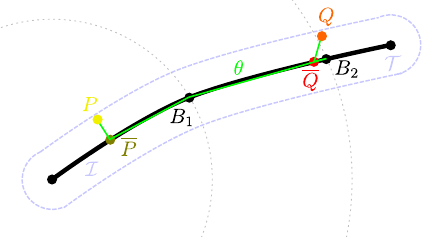}
		\caption{Illustration of the proof of Claim \ref{leap_projection}.}
	\end{figure}
	
	\begin{proof}
		
		Fix some projections $\overline{P} \in \pi_{\gamma}(P)$  and $\overline{Q} \in \pi_{\gamma}(Q)$ of $P$ and $Q$ on $\gamma$. 
		We claim that the path $\theta$ composed of the geodesic segments $P\overline{P}$, $\overline{P}B_1$, $B_1B_2$, $B_2 \overline{Q}$, and $\overline{Q}Q$ satisfies the  inequality:
		$$|\theta|=|P\overline{P}|+|\overline{P}B_1| +|B_1B_2|+|B_2 \overline{Q}|+|\overline{Q}Q| \leq |PQ|+8D.$$
		
		Since $P$ and $Q$ belong to $N_D(\gamma)$, both segments $P \overline{P}$ and $\overline{Q}Q$  have length at most $D$. Then, the triangle inequality implies that \begin{align} \label{ineq_1}
			|P\overline{P}|+|\overline{P}\overline{Q}| +|\overline{Q}Q| \leq |PQ|+ 2|P\overline{P}|+2|\overline{Q}Q| \leq |PQ|+4D   
		\end{align}
		
		Notice that if $d(A,\overline{P}) > d(A,B_1)= R-4D$, then $|\overline{P}B_1|\leq D$, and if $d(A,\overline{Q}) < d(A,B_2)= R+4D$, then $|B_2\overline{Q}|\leq D$. Therefore, whenever one of the segments $\overline{P}B_1 $ and $B_2\overline{Q}$ is not a subsegment of $\overline{P}\overline{Q}$, its traversals contribute at most $2D$ to the expression $ |\overline{P}B_1| +|B_1B_2|+|B_2 \overline{Q}| -|\overline{P}\overline{Q}| $, so  we have
		$$ |\overline{P}B_1| +|B_1B_2|+|B_2 \overline{Q}| \leq |\overline{P}\overline{Q}|+4D.$$
		
		Combining this with Inequality \eqref{ineq_1}
		we obtain the desired inequality:  $$|\theta| \leq |PQ|+8D.$$
		
		To prove the inequality $|PB_1|+|B_2Q| \leq |PQ|$, observe that the triangle inequality and our upper bound on $|\theta|$ imply the following inequalities:
		$$ |PB_1|+|B_2Q| \leq |P\overline{P}|+|\overline{P}B_1| +|B_2 \overline{Q}|+|\overline{Q}Q| \leq |PQ|+8D -|B_1B_2| =|PQ|$$
	\end{proof}

	\begin{claim}\label{step_projection}
		Let $P$ be a point in $\mathcal{I}$ and $Q$ be a point in $\mathcal{M}$. 
		Then we have $$|PB_1| \leq |PQ|+6D.$$ 
	\end{claim}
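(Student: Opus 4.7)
The approach closely mirrors the proof of Claim~\ref{leap_projection}: project $Q$ onto $\gamma$ and exploit the one-dimensional geometry along the geodesic. The key simplification compared to the previous claim is that only the single ``barrier'' point $B_1$ is involved, and the hypothesis $Q \in \mathcal{M}$ already forces $Q$ to be close to $B_1$ up to a bounded multiple of $D$. Once this closeness is in hand, the desired inequality reduces to a single application of the triangle inequality $|PB_1| \leq |PQ| + |QB_1|$, with no need to construct an explicit piecewise geodesic path as in Claim~\ref{leap_projection}.

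Concretely, the plan is as follows. First, fix a projection $\overline{Q} \in \pi_{\gamma}(Q)$; since $Q \in L_1 \cup L_2 \subseteq N_D(\gamma)$, we have $|Q\overline{Q}| \leq D$. The definition of $\mathcal{M}$ gives $d(A, Q) \in (R-4D,\, R+4D)$, so applying the triangle inequality to the triple $A, Q, \overline{Q}$ yields $d(A, \overline{Q}) \in (R-5D,\, R+5D)$. Because $\overline{Q}$ and $B_1$ both lie on the geodesic $\gamma$ and $d(A, B_1) = R-4D$, the distance $|\overline{Q}B_1|$ equals $|d(A, \overline{Q}) - (R-4D)|$, which is bounded by $5D$. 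Adding these, $|QB_1| \leq |Q\overline{Q}| + |\overline{Q}B_1| \leq 6D$, and then $|PB_1| \leq |PQ| + |QB_1| \leq |PQ| + 6D$ finishes the argument.

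There is no genuine obstacle here: this is a strictly simpler projection estimate than the one used in Claim~\ref{leap_projection}, since only a single endpoint $B_1$ needs to be controlled rather than the pair $B_1, B_2$. The hypothesis $R \geq 4D$ plays no role in this particular step; the essential inputs are the definition of $\mathcal{M}$ and the fact that $L_1 \cup L_2$ lies in the $D$-neighborhood of $\gamma$. The information about $P \in \mathcal{I}$ is used only implicitly, to ensure that both $P$ and $Q$ are points in $X$ so that the triangle inequality $|PB_1| \leq |PQ| + |QB_1|$ is available.
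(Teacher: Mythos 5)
Your argument has an arithmetic error at the key step, and the error is not fixable within your approach because you discard the hypothesis $P \in \mathcal{I}$, which is in fact essential to obtaining the constant $6D$.

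You establish $d(A, \overline{Q}) \in (R - 5D,\, R + 5D)$ and then assert that $|\overline{Q}B_1| = |d(A, \overline{Q}) - (R - 4D)|$ is bounded by $5D$. But the offset here is $R - 4D$, not $R$: from $d(A,\overline{Q}) \in (R-5D,\,R+5D)$ one gets $d(A,\overline{Q}) - (R-4D) \in (-D,\, 9D)$, so the correct bound is $|\overline{Q}B_1| < 9D$. Consequently $|QB_1| \le |Q\overline{Q}| + |\overline{Q}B_1| < 10D$, and the triangle inequality yields only $|PB_1| \le |PQ| + 10D$, not $|PQ| + 6D$.

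More importantly, you write that the hypothesis $P \in \mathcal{I}$ is ``used only implicitly,'' but the constant $6D$ genuinely cannot be obtained without it. Your chain of inequalities is valid for an arbitrary point $P$, and for arbitrary $P$ the claim is simply false: take $X = \mathbb{R}$, $A = 0$, $R = 10D$, $B_1 = 6D$, and $P = Q = 14D - \varepsilon \in \mathcal{M}$; then $|PQ| = 0$ while $|PB_1| = 8D - \varepsilon > 6D$. The paper's proof projects $P$ onto $\gamma$ as well and uses $P \in \mathcal{I}$ (so $d(A,P) \le R-4D$, hence $d(A,\overline{P}) \le R - 3D$) to compare the detour $\overline{P} \to B_1 \to \overline{Q}$ against $\overline{P} \to \overline{Q}$ directly. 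A case analysis on the order of $B_1, \overline{P}, \overline{Q}$ along $\gamma$ shows $|\overline{P}B_1| + |B_1\overline{Q}| \le |\overline{P}\overline{Q}| + 2D$: when $B_1$ lies between the projections the detour is free, and when $\overline{Q}$ or $\overline{P}$ falls between the other two, the bound $d(A,\overline{P}) \le R - 3D$ (together with $d(A,\overline{Q}) > R - 5D$) caps the overshoot by $D$ on each side. Combined with $|P\overline{P}|, |Q\overline{Q}| \le D$ and $|\overline{P}\overline{Q}| \le |PQ| + 2D$, this gives exactly $6D$.

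For what it's worth, your weaker bound of $10D$ would still propagate harmlessly through Lemma~\ref{comb_lemma_1d} (the ``$12ND$'' would become ``$20ND$'' and the final constant $24$ would become $32$), so the downstream argument is not in jeopardy. But the claim as stated asserts $6D$, and your proof as written neither achieves it nor could be repaired without reinstating the role of $P \in \mathcal{I}$.
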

	\begin{proof}

		The proof is similar to the proof of  Claim \ref{leap_projection}. 
		We fix some projections $\overline{P} \in \pi_{\gamma}(P)$  and $\overline{Q} \in \pi_{\gamma}(Q)$ of $P$ and $Q$ on $\gamma$. Then we show that the path $\theta$ composed of the geodesic segments $P\overline{P}$, $\overline{P}B_1$, $B_1 \overline{Q}$, and $\overline{Q}Q$ satisfies the inequality:
		
		$$|\theta| =|P\overline{P}| +|\overline{P}B_1|+|B_1 \overline{Q}|+|\overline{Q}Q| \leq |PQ|+6D.$$
		
		While showing that $ |\overline{P}B_1|+|B_1 \overline{Q}| \leq |\overline{P}\overline{Q}| +2D$, we have two additional cases where the points $B_1, \overline{P},$  and $\overline{Q}$ appear along  $\gamma$ in the order $B_1, \overline{Q}, \overline{P}$ or in the order $\overline{Q}, \overline{P}, B_1$. In the first case we have $|B_1\overline{Q}|\leq | \overline{P}B_1| \leq D$ , and in the second, we have $| \overline{P}B_1| \leq |B_1\overline{Q}| \leq D $. In any case, the inequality $ |\overline{P}B_1|+|B_1 \overline{Q}| \leq |\overline{P}\overline{Q}| +2D$ holds.
		
	\end{proof}
	Of course, the inequalities in Claim \ref{step_projection} remain true if we assume that $P$ is in $\mathcal{T}$ instead of $\mathcal{I}$, and $B_1$ is replaced with $B_2$.

	For our construction of $\beta$, it will be convenient to classify parts of $\alpha$ as follows. We call \textit{the trace of $\alpha$  in $\mathcal{I}$} the union of all geodesic segments of $\alpha$ that connect two nodes from $\mathcal{I}$, and we will denote it by $\alpha_I$. It is easy to see that $\alpha_I$ is the union of maximal subpaths of $\alpha$ that have both endpoints in the nodes of $\alpha$ contained in $\mathcal{I}$. Let us list these paths in the order in which they are covered by $\alpha$: $p_1, p_2, \ldots, p_t$.  The trace of $\alpha$ in $\mathcal{T}$, denoted by $\alpha_T$, and the ordered list of its maximal subpaths $q_1, ..., q_r$ are defined similarly.  Furthermore, we will also split the geodesic segments of $\alpha$ that do not belong $\alpha_I$ or $\alpha_T$ and are not joining two nodes in $\mathcal{M}$ into two categories.
	We will call a geodesic segment $s_i$ \textit{a step}, if it connects a node in $\mathcal{M}$ with a node in $\mathcal{I} \cup \mathcal{T}$, and we will call $s_i$ \textit{a leap} if one of its nodes is in $\mathcal{I}$ and the other is in $\mathcal{T}$.  To construct the path $\beta$ we will first connect the subpaths of $\alpha_I$ and $\alpha_T$ by geodesic segments going through $B_1$ and $B_2$ respectively. 
	The precise procedure for the trace $\beta_I$ is described below.
	
	\begin{enumerate}
		\item Both $\alpha$ and $\beta$ start by traversing $p_1$.
		\item If $\beta$  is traversing $p_i$, and $p_i$ ends with the node $V_i$ while the path $p_{i+1}$ starts with the node $W_{i+1}$, $\beta$ will transition from $V_i$ to $W_{i+1}$ via the segments $V_iB_1$ and $B_1W_{i+1}$, and then track $p_{i+1}$ following $\alpha$.
		\item Once $\beta$ reaches the endpoint $V_t$ of the last path $p_t$, it proceeds to $B_1$ via the segment $V_tB_1$.
	\end{enumerate}
	
	The trace $\beta_T$ is constructed in a similar way, but this time the starting point of $q_1$ is connected to $B_2$ by a geodesic segment (the final node of $q_r$ is $C$ and it is also the end node of $\beta$).
	
	At this stage, we added to $\alpha_I$ and $\alpha_T$ the segments connecting all of the endpoints of $p_i , i=1, \ldots ,t$, except for $A$, with $B_1$, and the segments connecting all of the endpoints of $q_j, j =1, \ldots, r$, except for $C$, with $B_2$.  Notice that if one of the maximal subpaths consists of a single point $P$, then we are adding the corresponding segment $PB_i$ twice.
	
	Notice that each of the paths $p_2, \ldots, p_t, q_1, \ldots, q_r$ is immediately preceded in $\alpha$ by a uniquely determined step or leap (otherwise, these would not be the maximal subpaths of $\alpha_I$ and $\alpha_T$). Moreover, each of the paths $p_1, \ldots, p_t, q_1, \ldots, q_{r-1}$ is immediately followed by a uniquely determined step or leap of $\alpha$. 
	Therefore, any node of $\alpha$, except for $A$ and $C$, is listed as an endpoint of some of the paths $p_1, \ldots, p_t, q_1, \ldots , q_r $  as many times, as it is listed as endpoint of steps and leaps of $\alpha$. Here we also use the convention that if some of the maximal paths consists of a single node, then this point is counted twice - once as a starting point and once as an ending point of the single node maximal path.
	
	Now we are ready to prove the upper bound on the total length of the new segments.
	Pair each leap $PQ,  P \in \mathcal{I},  Q \in \mathcal{T}$ with the segments $PB_1$ and $B_2Q$ that were added when we constructed $\beta_I$ and $\beta_T$.  Notice that Claim \ref{leap_projection} implies that $|PB_1|+|B_2Q| \leq |PQ|$.  It is also easy to see that each traversal of an added segment can be associated with at most one leap. Therefore, the total length of the new segments paired with the leaps does not exceed the total length of all of the leaps. The remaining added segments are in a natural one-to-one correspondence with the steps of $\alpha$ - each step $PQ, P \in \mathcal{I} \cup \mathcal{T}, Q \in \mathcal{M}$ is associated with the added segment $PB_1$ if $P \in \mathcal{I}$ or $PB_2$ if $P \in \mathcal{T}$. Notice that in this case, Claim \ref{step_projection} implies that $|PB_i| \leq |PQ|+6D$. Moreover, each node in $\mathcal{M}$ is the endpoint of at most two steps, so the number of steps is bounded by $2N$. As a result, the total increase in length from the segments associated with steps is at most $12ND$.
	
	Since there was no increase coming from the segments paired with leaps, we conclude that $|\beta_I|+|\beta_T|$ is at most $|\alpha|+12ND$. Moreover, $\beta_I$ already covers all the nodes in  $L_1 \cap \mathcal{I}$ and $\beta_T$ covers all the nodes in $L_2 \cap \mathcal{T} $, because $L_1 \Delta L_2  \cap \mathcal{I}=L_1 \cap \mathcal{I}$ and $L_1 \Delta L_2  \cap \mathcal{T}=L_2 \cap \mathcal{T} $ by Condition \eqref{separation} in the statement of this lemma. Furthermore, by construction, $\beta_I$  ends at $B_1$, and $\beta_T$ starts at $B_2$, and hence, to complete the construction of $\beta$, we only need to pick an arbitrary path that starts at $B_1$, then visits every point in $L_1 \cap \mathcal{M}$, goes through $B$, then visits every point in $L_2 \cap \mathcal{M}$, and ends at $B_2$. Since the diameter of $\mathcal{M}$ does not exceed $12D$, and the path should have at most $N+3$ nodes, the length of such a path may be bounded above by $12D(N+2)$. In total, we have the desired upper bound $$|\beta| \leq |\alpha|+12ND+12D(N+2) =|\alpha|+24(N+1)D$$
	This completes the proof of the lemma.
	
\end{proof}

\section{Bounds on the moments of the defect and the CLT}\label{section: final proof CLT}

In this section we apply Lemma \ref{comb_lemma_1d} to obtain upper bounds on the moments of the defective adapted cocycles when the base group $H$ is acylindrically hyperbolic. 

\begin{thm} \label{thm: moments of the defect}
	Let $G=A \wr H$ be a lamplighter group with a finitely generated acylindrically hyperbolic base group $H$. Let $d$ be a standard metric on $G$. Assume that the probability measure $\mu$ on  $G$ has finite exponential moment, and its projection onto $H$, $\mu_H$ is a non-elementary measure. Let $\mathcal{Q}$ be the corresponding length defective adapted cocycle and $\Psi_{m,n}, \,m, n  \geq 1,$ be its defects. Then, for every $p \geq 1$, there exists a constant $K_p(\mu,d)$ such that for any $m,n \in \mathbb{N}$ the following inequality holds:
	\[
	\E^{\mu}|\Psi_{m,n}|^p\leq K_p \log(n+m)^{5p}.
	\]
	
	Moreover, if we assume that $\mu$ has finite support, then the right-hand side of the inequality could be strengthened to
	\[
	\E^{\mu}|\Psi_{m,n}|^p\le K_p \log(n+m)^{2p}.
	\]
\end{thm}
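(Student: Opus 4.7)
I would decompose the defect into a TSP part and a lamp-count part, bound each deterministically on a high-probability good event, and absorb the complement using the exponential moment of $\mu$. Writing $Z_k = (F_k, g_k)$ and $\tilde Z_n \coloneqq X_{m+1}\cdots X_{m+n} = (\tilde F_n, \tilde g_n)$, the identities $Z_m^{-1}Z_{m+n} = \tilde Z_n$ and $F_{m+n} = F_m \cdot (g_m.\tilde F_n)$, together with left-invariance of $d$ and the word-length formula of Subsection~\ref{subsubsection: the word metric}, yield
\[
\Psi_{m,n} = \Delta^{\mathrm{tsp}}_{m,n} + \Delta^{\mathrm{lamp}}_{m,n},
\]
where, with $S_1 \coloneqq \supp{F_m}$ and $S_2 \coloneqq \supp{g_m.\tilde F_n}$,
\begin{align*}
\Delta^{\mathrm{tsp}}_{m,n} &\coloneqq \tsp(\id_H, S_1, g_m) + \tsp(g_m, S_2, g_{m+n}) - \tsp(\id_H, \supp{F_{m+n}}, g_{m+n}), \\
\Delta^{\mathrm{lamp}}_{m,n} &\coloneqq |S_1| + |S_2| - |\supp{F_{m+n}}|.
\end{align*}
A pointwise count at each $x \in H$---valid even for non-abelian $A$ since the lamp operations at distinct positions commute---gives $0 \le \Delta^{\mathrm{lamp}}_{m,n} \le 2|S_1 \cap S_2|$.

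I would then construct a good event $G_{m,n}$ on which, at suitable polylogarithmic scales in $n+m$:
\begin{enumerate}
\item[(a)] every $g_k$ with $0 \le k \le m+n$ lies within distance $D_1 = O(\log^{a}(n+m))$ of a fixed geodesic $\gamma$ of $H$ from $\id_H$ to $g_{m+n}$, via Proposition~\ref{prop: tracking};
\item[(b)] linear progress holds, $|d(\id_H, g_m) - \ell_H m| \le D_1$ and $|d(\id_H, g_{m+n}) - \ell_H(m+n)| \le D_1$, so the separation hypotheses $R \ge 4D'$ and $d(\id_H,g_{m+n}) \ge R + 4D'$ of Lemma~\ref{comb_lemma_1d} hold with $R \coloneqq d(\id_H, g_m)$;
\item[(c)] every increment satisfies $|X_k|_G \le D_2 = O(\log^{b}(n+m))$ in the exponential moment case, and $D_2 = O(1)$ in the finitely-supported case.
\end{enumerate}
Each of (a)--(c) fails with probability exponentially small in its defining parameter, by the tracking proposition, the Kingman-type linear progress estimate for non-elementary walks, and the finite exponential moment of $\mu$, respectively. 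A union bound over $O(n+m)$ indices then yields $\Prob(G_{m,n}^c) \le (n+m)^{-K}$ for any prescribed $K$, upon enlarging the constants in $D_1, D_2$.

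On $G_{m,n}$, every lamp in $S_1 \cup S_2$ lies within $D \coloneqq D_1 + D_2$ of $\gamma$ (each lamp is within one increment of some $g_k$), and the containments $S_1 \subseteq \{x : d(\id_H,x) \le R + 4D\}$, $S_2 \subseteq \{y : d(\id_H,y) \ge R - 4D\}$ follow from (a)--(c) using that $S_1$ is placed during times $[0,m]$ and $S_2$ during $[m, m+n]$. Lemma~\ref{comb_lemma_1d} applied with $L_1 = S_1$, $L_2 = S_2$, $L_3 = \supp{F_{m+n}}$ then gives $\Delta^{\mathrm{tsp}}_{m,n} \le 24(N+1)D$, where $N$ is the number of lamp positions in the annulus $\{x : |d(\id_H,x) - R| \le 4D\}$. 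By (b) and (c), only $O(D)$ time-steps place lamps into an $O(D)$-thickening of that annulus, each contributing at most $D_2$ lamps, so $N \le C_0 D D_2$; the same annulus argument bounds $|S_1 \cap S_2| \le N$. Calibrating $D_1, D_2$ produces a deterministic estimate $|\Psi_{m,n}| \le C \log^{5}(n+m)$ on $G_{m,n}$, respectively $C\log^{2}(n+m)$ when $\mu$ is finitely supported.

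To conclude, I split
\[
\E^{\mu}|\Psi_{m,n}|^p \le C^p \log^{5p}(n+m) + \E^{\mu}\bigl[|\Psi_{m,n}|^p \mathbf{1}_{G_{m,n}^c}\bigr],
\]
and handle the tail by the crude bound $|\Psi_{m,n}| \le d(\id_G, Z_m) + d(\id_G, \tilde Z_n) \le C(n+m)$ in $L^{2p}$ (consequence of the finite exponential moment), combined with Cauchy--Schwarz and $\Prob(G_{m,n}^c) \le (n+m)^{-K}$ for $K$ large. The main technical obstacle is the joint calibration of the polylogarithmic scales in (a)--(c) and of $N$: each loss in tracking, linear progress or maximum increment feeds multiplicatively into the quantity $24(N+1)D$, and these losses must combine to yield precisely the exponents $5$ and $2$ in the two regimes claimed.
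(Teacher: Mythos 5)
Your strategy is the same as the paper's: split $\Psi_{m,n}$ into a TSP defect and a lamp-count defect, define a high-probability good event from Propositions~\ref{prop: tracking} and~\ref{lem: uniform progress} plus increment bounds, apply Lemma~\ref{comb_lemma_1d}, and handle the tail by a crude bound and the exponential moment. The gap is item~(b): you claim that $|d(\id_H, g_m) - \ell_H m| \le D_1 = O(\log^{a}(n+m))$ holds with overwhelming probability. This is false---the fluctuations of $d(\id_H, g_m)$ around $\ell_H m$ are of order $\sqrt{m}$ (that is precisely the CLT in $H$ one is building on), and neither Kingman's theorem nor large-deviation estimates give concentration at scale $o(\sqrt m)$. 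Proposition~\ref{lem: uniform progress} gives only the one-sided lower bound $d_H(\overline Z_i, \overline Z_j) \ge |i-j|/K_0$; the paper derives the separation hypothesis $d_H(\id_H, g_{m+n}) \ge R + 4D$ not from concentration of $R$ around $\ell_H m$, but from $d_H(g_m, g_{m+n}) \ge n/K_0$ combined with the fact that $g_m$ is $D$-close to the geodesic from $\id_H$ to $g_{m+n}$.

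This error propagates into the verification of Condition~(\ref{separation}) of Lemma~\ref{comb_lemma_1d}, which you dismiss as following ``from (a)--(c)''. Without (b), one must bound how far the walk can overshoot the sphere of radius $R$ about $\id_H$ during times $[0,m]$ before returning. The paper's argument uses the uniform progress, tracking, and increment bounds jointly: any two times $i<j$ with both $\overline Z_i, \overline Z_j$ near $\gamma$ and $|d_H(\id_H,\overline Z_i)-d_H(\id_H,\overline Z_j)| \le C_0\log(m+n)$ must satisfy $j-i \le K_0(4D_1+C_0\log(m+n))$, and this caps the overshoot by a quantity of order $\log^2(m+n)$. Consequently the thickness $D$ fed into Lemma~\ref{comb_lemma_1d} must be taken of order $\log^2(n+m)$, not $D_1+D_2 = O(\log(n+m))$ as you set. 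With the corrected $D$, your own estimate $N = O(DD_2)$ becomes $O(\log^3(n+m))$ and $(N+1)D = O(\log^5(n+m))$ as required; with your $D$ the exponent would come out as $3$, not $5$, so the ``calibration'' you defer is exactly the step your assumption (b) papers over.
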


We will use the following two propositions in our proof. These results will guarantee logarithmic growth of the deviation from the geodesic and linear progress of the trajectory of the random walk projected to $H$. We note that these propositions are the only parts of our arguments that require $H$ to be an acylindrical hyperbolic group.
The first proposition is a straightforward corollary of Theorem 9.1 and Theorem 10.7 in \cite{MathieuSisto2020}.

\begin{prop}\label{prop: tracking}
	Let $H$ be a finitely generated acylindrically hyperbolic group and let $\mu_H$ be a symmetric non-elementary probability measure on $H$ with finite exponential moment.  Choose an arbitrary finite symmetric generating set of $H$ and let $d_H$ be the corresponding word metric on $H$.  Finally, we denote by $\overline{Z}_n$ the position of the random walk driven by $\mu_H$ at time $n$. Then the following statements hold.

	\begin{enumerate}
		\item There exists a constant $K$ such that  for any $n \geq 1$
		\begin{align*}
			\mathbb{P}^{\mu_H} \left( d_H(\overline{Z}_n,e_H) \leq n/K \right) \leq Ke^{-n/K}
		\end{align*}
		\item  For any $p \ge 1$, there is a constant $C_1$ such that for any $n \geq 1$ and any geodesic segment  $\alpha$ joining $\id$ and $Z_n$ we have
		\begin{align*}
			\mathbb{P}^{\mu_H} \left( \max_{0 \leq k \leq n} d_H(\overline{Z}_k,\alpha) \geq C_1 \log n \right) \leq C_1/n^{2p}
		\end{align*}
	\end{enumerate}
\end{prop}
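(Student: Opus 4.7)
The plan is to deduce both statements directly from the quantitative linear-progress and geodesic-tracking estimates in \cite{MathieuSisto2020}, since the hypotheses on $(H,\mu_H)$ fit the setting of Sections 9 and 10 there verbatim: $H$ is finitely generated and acylindrically hyperbolic, while $\mu_H$ is symmetric, non-elementary, and has finite exponential moment with respect to the word metric $d_H$. There is essentially nothing to prove in the usual sense; the proposal is to unpack the cited statements into the precise form required here.

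For item (1), I would invoke the exponential linear-progress theorem (Theorem 9.1 of \cite{MathieuSisto2020}), which under these hypotheses supplies constants $L, c, C > 0$ depending only on $\mu_H$ and $d_H$ such that
\[
\mathbb{P}^{\mu_H}\!\bigl( d_H(\id_H, \overline{Z}_n) \leq L n \bigr) \leq C e^{-c n}
\]
for every $n \geq 1$. Taking $K \geq \max(1/L,\, C,\, 1/c,\, 1)$ collapses the three constants into the single constant $K$ appearing in the stated bound.

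For item (2), I would apply the geodesic tracking theorem (Theorem 10.7 of \cite{MathieuSisto2020}). In its quantitative form it gives, for every exponent $q \geq 1$, a constant $C_q > 0$ such that for any $n \geq 1$ and any geodesic segment $\alpha$ joining $\id_H$ to $\overline{Z}_n$,
\[
\mathbb{P}^{\mu_H}\!\Bigl( \max_{0 \leq k \leq n} d_H(\overline{Z}_k, \alpha) \geq C_q \log n \Bigr) \leq \frac{C_q}{n^{q}}.
\]
Applying this with $q = 2p$ and relabelling $C_1 \coloneqq C_{2p}$ produces the claimed inequality verbatim.

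The only (mild) obstacle is bookkeeping. Mathieu and Sisto phrase their theorems relative to an auxiliary action on a Gromov hyperbolic space witnessing the acylindrical hyperbolicity of $H$, so one must verify that our non-elementary assumption on $\mu_H$ matches their genericity hypothesis — equivalently, that the subsemigroup generated by $\supp{\mu_H}$ contains a pair of loxodromic elements generating a free group — and that their displacement estimates on the hyperbolic space translate to word-metric statements via a cobounded orbit map from the Cayley graph. Both reductions are standard and are indicated in Sections 8–9 of \cite{MathieuSisto2020}; they only affect the values of the constants $K$ and $C_1$, not the form of the bounds.
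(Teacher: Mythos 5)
Your proposal is correct and follows exactly the paper's route: both items are read off from Mathieu--Sisto (Theorem 9.1 for linear progress, Theorem 10.7 for tracking), with the ``bookkeeping'' you describe being precisely what the paper handles by additionally citing Remark 10.2 and Proposition 10.3 of that reference to pass from the auxiliary hyperbolic space to the word metric on $H$.
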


\begin{proof}
	The first statement immediately follows from Theorem 9.1, Remark 10.2 and statement 1 in Proposition 10.3 in \cite{MathieuSisto2020}. The second statement follows from Theorem 10.7 and Remark 10.2 in \cite{MathieuSisto2020}.
\end{proof}

We note that if $\mu_H$-random walk $\overline{Z}_{k}$ on $H$ in Proposition \ref{prop: tracking} is a projection of a $\mu$-random walk on $G=A \wr H$, then, since $\mathbb{P}^{\mu_H}$ is the pushforward of $\mathbb{P}^\mu$, the inequalities in Proposition  \ref{prop: tracking} hold as inequalities with respect to $ \mathbb{P}^{\mu}$. We will keep this convention for the remainder of the article.
\begin{prop}
	
	\label{lem: uniform progress}
	Under the same hypotheses as Proposition \ref{prop: tracking},
	for any $p \ge 1$, there is a constant $K_0 \geq 1 $ such that with for 
	any $n,m$, with probability at least $1-\frac{1}{(n+m)^{2p}}$ the following holds. For all
	$i,j\in \{0,\ldots n+m\}$ such that
	$|i-j|\ge K_0\log(n+m)$, we have $d_H(\overline{Z}_i,\overline{Z}_j)\ge \frac{|i-j|}{K_0}$.
\end{prop}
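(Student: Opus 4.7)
The plan is a direct large-deviation union bound built on top of item~(1) of Proposition~\ref{prop: tracking}. First, observe that by the Markov property of the $\mu_H$-random walk and the stationarity of its increments, $\overline{Z}_i^{-1}\overline{Z}_j$ is distributed as $\overline{Z}_{|j-i|}$ for every $i,j \ge 0$. Combining this with the left-invariance of the word metric $d_H$ on $H$, one sees that $d_H(\overline{Z}_i,\overline{Z}_j)$ has the same law as $d_H(\id_H,\overline{Z}_{|j-i|})$. Letting $K$ denote the constant provided by Proposition~\ref{prop: tracking}(1), this gives
\begin{equation*}
\Prob^{\mu}\bigl(d_H(\overline{Z}_i,\overline{Z}_j) \le |j-i|/K\bigr) \le K\, e^{-|j-i|/K}
\end{equation*}
for all $i,j \in \{0,\dots,n+m\}$.

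Next, I would fix $K_0 \coloneqq K(2p+5)$ and estimate the failure probability. For any pair $0 \le i<j \le n+m$ with $|j-i| \ge K_0\log(n+m)$, the displayed estimate, together with the inclusion $\{d_H \le |j-i|/K_0\} \subset \{d_H \le |j-i|/K\}$ coming from $K_0 \ge K$, yields
\begin{equation*}
\Prob^{\mu}\bigl(d_H(\overline{Z}_i,\overline{Z}_j) < |j-i|/K_0\bigr) \le K\,(n+m)^{-(2p+5)}.
\end{equation*}
Taking a union bound over the at most $(n+m+1)^2 \le 4(n+m)^2$ such pairs produces a total failure probability of at most $4K(n+m)^{-(2p+3)}$, which is bounded above by $(n+m)^{-2p}$ once $n+m$ exceeds a constant depending only on $p$ and $K$. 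The finitely many remaining values of $n+m$ (including $n+m\in\{0,1\}$, for which the claim is vacuous since the target lower bound on the probability is non-positive) are absorbed by enlarging $K_0$ further; the adjusted constant still depends only on $p$ and the data of $\mu_H$ via $K$.

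The argument is essentially routine, and I do not expect any serious obstacle. The two points that must be tracked carefully are (a) the compatibility of the constants, which forces the choice $K_0/K \gtrsim 2p + \text{const}$ so that the union bound over $O((n+m)^2)$ pairs beats the target $(n+m)^{-2p}$, and (b) the reduction from $(\overline{Z}_i,\overline{Z}_j)$ to $\overline{Z}_{|j-i|}$ via stationarity of increments and left-invariance of $d_H$, both of which are standard. Acylindrical hyperbolicity of $H$ enters only through the exponential linear-progress bound of Proposition~\ref{prop: tracking}(1).
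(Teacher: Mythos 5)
Your proof is correct and is exactly the argument the paper has in mind: the paper's own proof consists of the single line ``This follows from Proposition \ref{prop: tracking} together with a union bound,'' and your write-up fills in the stationarity-of-increments reduction, the choice of $K_0$, and the union bound over $O((n+m)^2)$ pairs that the authors leave implicit.
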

\begin{proof}
	This follows from Proposition \ref{prop: tracking} together with a union bound.
\end{proof}

\begin{proof}[Proof of Theorem \ref{thm: moments of the defect}]
	
	We will divide the proof into three steps. In Step $1$ we will establish some a priori bounds on the moments of the defect $\Psi_{m,n}$ and deal with the case where either $m$ or $n$ is sufficiently small relative to their sum. In the second step, we will prove the desired upper bound for the general case of the measure with finite exponential moment under the assumption that $m$ and $n$ are sufficiently large. Finally, in Step $3$ we will briefly explain how to modify the argument from Step $2$ to decrease the right hand side if we assume that $\mu$ has finite support.
	
	For $p \geq 1$ denote by $\kappa_p(\mu,d)$ the $p$-th moment of measure $\mu$ with respect to distance $d$:
	
	$$ \kappa_p(\mu,d) =\E^{\mu}d(\id,Z_1)^p.$$

	\subsection*{Step 1}
	Notice that we have the following elementary upper bounds:
	
	\begin{align*}
		|\Psi_{m,n}|=|d(\id,Z_m)+d(Z_m, Z_{m+n}) -d(\id, Z_{m+n})| \\ \leq d(\id,Z_m)+|d(Z_m, Z_{m+n}) -d(\id, Z_{m+n})| \leq 2d(\id,Z_m)    
	\end{align*}
	and 
	\begin{align*}
		|\Psi_{m,n}| =|d(\id,Z_m)+d(Z_m, Z_{m+n}) -d(\id, Z_{m+n})| \\
		\leq d(Z_m,Z_{m+n})+|d(\id, Z_{m}) -d(\id, Z_{m+n})|
		\leq 2d(Z_m,Z_{m+n})   
	\end{align*}
	
	Therefore, we can derive the following upper bound for any $p \geq 1$: 
	\begin{align*}
		|\Psi_{m,n}|^p \leq  2^pd(\id,Z_m)^p \leq    2^p \Big(\sum_{k=0}^{m-1}d(Z_k, Z_{k+1}) \Big)^p 
		\leq 2^p m^{p-1}\sum_{k=0}^{m-1}d(Z_k, Z_{k+1})^p.
	\end{align*}
	
	Since $d(Z_k, Z_{k+1})$ are i.i.d. with the same distribution as $d(Z_0, Z_1)=d(\id,Z_1)$, taking the expectation, we get 
	
	$$\E^{\mu}|\Psi_{m,n}|^p \leq 2^pm^p \kappa_p(\mu,d).$$
	
	The inequality $\E^{\mu}|\Psi_{m,n}|^p \leq 2^pn^p \kappa_p(\mu,d)$ can be proved in the same way.

	Therefore, if $ \min {(m,n)} \leq M \log (m+n)^2$ for some fixed $M>0 $,  we get the inequality 
	\[\E^{\mu}|\Psi_{m,n}|^p \leq 2^pM^p \log{(m+n)}^{2p} \kappa_p(\mu,d),\]
	which shows that under the assumption $ \min {(m,n)} \leq M \log (m+n)^2$ the moment $\E^{\mu}|\Psi_{m,n}|^p$ grows no faster than a polynomial of degree $2p$ in $\log(m+n)$.
	
	\subsection*{Step 2}
	From now on, we will assume that $ \min {(m,n)} \geq M \log (m+n)^2$ for some large constant $M$ that depends only on $\mu$ and $d$ and will be defined precisely later.
	
	Since $|\Psi_{m,n}|$ has all moments,  we can limit the expectation of $|\Psi_{m,n}|^p$ over the event $\{|\Psi_{m,n}| \geq  (n+m)^2 \}$ as follows:
	\begin{align*}
		\E^{\mu}\big[|\Psi_{m,n}|^p: |\Psi_{m,n}| \geq  (n+m)^2 \big] \leq  \frac{\E^{\mu}|\Psi_{m,n}|^{2p} }{ (n+m)^{2p}} \leq \frac{2^{2p}(m+n)^{2p} \kappa_{2p}(\mu,d)}{ (n+m)^{2p}}=2^{2p}\kappa_{2p}(\mu,d).   
	\end{align*}
	Hence, the expectation over this set has a constant upper bound that does not depend on $m,n$.
	
	Therefore, in our subsequent estimates it suffices to bound the expectation of $|\Psi_{m,n}|$ on the event $S_{m,n} =\{|\Psi_{m,n}| \leq  (n+m)^2 \}$.

	Let us fix some $p \geq 1$. Notice that for any event $A$ with $\mathbb{P}^\mu(A)  \leq \frac{M_1}{(m+n)^{2p}}$ for some constant $M_1 >0$, we have $\E^{\mu}\big[|\Psi_{m,n}|^p: A \cap S_{m,n} \big] \leq M_1 $, so it suffices to bound the expectation $\E^{\mu}\big[|\Psi_{m,n}|^2: A^C \cap S_{m,n} \big] $ taken over the intersection fo $S_{m,n}$ with the complement $A^c$.

	Our goal is to show that the trajectory of the random walk on $G$ up to time $m+n$ projected onto $H$ will satisfy the conditions of Lemma \ref{comb_lemma_1d} with $N$ and $D$ bounded above by polynomials of $\log(m+n)$ of low degree with probability at least $1- \frac{M_2}{(m+n)^{2p}}$ for some constant $M_2 > 0$. To simplify the notation, we will say that an event $A \subseteq \Omega$ that depends on $m$ and $n$  and is measurable with respect to $\sigma(X_1, \ldots, X_{m+n})$ holds with \textit{overwhelming probability} if $\mathbb{P}^{\mu}(A) \geq 1 -\frac{M_3}{(m+n)^{2p}}$ for some constant $M_3 >0$ that does not depend on $m$ and $n$.

	We will show that, with overwhelming probability, the lamps switched on at any given time $0 \leq k \leq m+n$ are sufficiently close to the position of $\overline{Z}_{k}$ in the Cayley graph of $H$.
	Since measure $\mu$ has finite exponential moment with respect to $d$, there exists $\alpha >0 $ such that $$e(\mu,d) =\E^{\mu}\exp(\alpha d(\id,X_1)) < \infty$$ 
	
	Therefore, by the Chebyshev inequality, there exists a constant $C_0 \geq 1$ such that for any $m,n \in \mathbb{N}:$ 
	\[\mathbb{P}^{\mu}\left(d(\id_G, Z_1)>C_0 \log(m+n)\right) \leq \frac{1}{(m+n)^{2p+1}}.\]
	But this implies the following chain of inequalities 
	\begin{align*}
		\mathbb{P}^{\mu}\left( \max_{k=1, \ldots, m+n} d(\id_G, X_k) > C_0 \log(m+n)\right) \leq 1 - \left(1-\frac{1}{(m+n)^{2p+1}}\right)^{m+n} \\
		\leq 1 -\left(1-\frac{1}{(m+n)^{2p}}\right) =\frac{1}{(m+n)^{2p}} .  
	\end{align*} 
	
	Notice that if $g \in G$ has the representation $(f,h)$ and $d(\id_G,g) \leq C_0 \log (m+n)$, then $|\supp{f}| \leq C_0 \log (m+n)$, and the distance from $\id_H$ to any element in $\supp{f} \cup \{h\}$ does not exceed $C_0 \log (m+n)$. Therefore, with overwhelming probability, we may assume that for every $k = 1, \ldots, m+n$ the lamps switched at the time $k$ are within the distance $C_0 \log (m+n)$ from the position $\overline{Z}_{k-1}$ in the Cayley graph of $H$, and $d_H(\overline{Z}_{k-1},\overline{Z}_{k}) \leq C_0 \log (m+n)$.
	
	Now, we will apply Lemma \ref{comb_lemma_1d} to the points $A=\id_H$, $B = \overline{Z}_m$, and $C=\overline{Z}_{n+m}$, where $L_1$ and $L_2$ are the sets of lamps switched between times $1$ and $m$,  and $m+1$ and $m+n$ respectively. We have $R=d_H(\id_H,\overline{Z}_m)$, and we need to verify that, with sufficiently high probability, there exists a positive number $D$ that satisfies the inequalities in the statement of Lemma \ref{comb_lemma_1d}. Notice that by Proposition \ref{lem: uniform progress}, we may assume that $R=d_H(\overline{Z}_0, \overline{Z}_m) \geq \frac{m}{K_0}$ and $ d_H(\overline{Z}_0, \overline{Z}_{m+n}) \geq \frac{m+n}{K_0}$ as soon as $m \geq K_0 \log (m+n)$. Moreover, by Item (2) in Proposition \ref{prop: tracking} applied to the geodesic segment $\gamma$ connecting $A$ and $C$, we may assume that 
	$$\max_{0 \leq k \leq m+n} d_H(\overline{Z}_k,\gamma) \leq C_1 \log (m+n).$$ 
	Since we know that every lamp from $L_1 \cup L_2$ is within $C_0 \log(m+n)$ distance from one of the points $\overline{Z}_k, k=0, \ldots, m+n$, we conclude that every lamp in $L_1 \cup L_2$ is within $(C_0+C_1)\log (m+n)$ distance from $\gamma$. Thus, $L_1 \cup L_2 \cup \{ B\} \subseteq N_D(\gamma)$ as soon as $D \geq(C_0+C_1)\log(m+n)$. To shorten the notation, we set $D_1=(C_0+C_1)\log(m+n)$.
	
	Now, notice that if for some $ 0 \leq i < j \leq n+m $ we have $$|d_H(A, \overline{Z}_i) - d_H(A,\overline{Z}_j)| \leq C_0 \log (m+n), $$ then the assumption $\{\overline{Z}_i, \overline{Z}_j\} \subseteq N_{D_1} (\gamma)$ implies that $$d_H(\overline{Z}_i,\overline{Z}_j) \leq 4D_1 +C_0 \log(m+n).$$ However, we also know that $d_H(\overline{Z}_i,\overline{Z}_j) \geq \frac{j-i}{K_0} $ or $j-i \leq K_0 \log(m+n) $, and thus we get $$ i < j \leq i+ K_0(4D_1+C_0 \log (m+n)).$$
	
	Since we know that $d_H(\overline{Z}_k, \overline{Z}_{k+1}) \leq C_0 \log (m+n)$ for all $k=0, \ldots, n+m-1$, the previous estimates imply that for any $j \geq m+K_0(4D_1+C_0 \log (m+n))$ we must have $d_H(A, \overline{Z}_j) \geq R$, and for any $0  \leq i \leq m-K_0(4D_1+C_0 \log (m+n))$ we have $d_H(A, \overline{Z}_i) \leq R$.  Moreover, for any $m \leq j \leq m+K_0(4D_1+C_0 \log (m+n))$ we will also have $$d_H(A, \overline{Z}_j) \geq R - C_0 \log (m+n) K_0(4D_1+C_0 \log (m+n)).$$
	
	In particular, no lamp from $L_2$ could be at the distance less than or equal to $R-D_2$ from $A$, where $$D_2 = C_0 K_0 \log (m+n) (4D_1+C_0 \log (m+n)) +C_0 \log (m+n).$$
	
	Furthermore, no lamp from $L_1$ could be at the distance greater than $R+D_2$ from $A$, and since we chose $C_0, K_0 \geq 1$, we automatically have $C_0K_0 \geq 1$, and as the result,  $D_2/4 \geq D_1$. Hence, if we set $D=D_2/4$, we get $L_1 \cup L_2 \cup \{ B\} \subseteq N_D(\gamma)$.

	Finally, since $D_1=(C_0+C_1)\log(m+n)$, $D$ is a quadratic function of $\log(m+n)$, and we may choose a constant $M$ large enough such that $M \log(n+m)^2 > 6DK_0$. Then, once we assume that $\min(m,n) \geq M \log(n+m)^2$, we will also immediately get $4D \leq \frac{m}{K_0} \leq d_H(A, B) =R$. Moreover, by Proposition \ref{lem: uniform progress}  we have that $6D \leq \frac{n}{K_0} \leq  d_H(B,C)$, and, since $d_H(B, \gamma) \leq D$, we have $$d_H(A,B)+d_H(B,C) \leq d_H(A,C)+2D.$$
	Hence, the following inequalities are valid:
	$$ 6D \leq \frac{n}{K_0} \leq  d_H(B,C)\leq d_H(A,C) -d_H(A,B)+2D =d_H(A,C) -R+2D,$$ so $d_H(A,C) \geq R+4D$.
	
	Therefore, all conditions of Lemma \ref{comb_lemma_1d} are satisfied, and we may conclude that 
	\[\left|\mathrm{TSP}(\id_H, L_1, \overline{Z}_m) +\mathrm{TSP}(\overline{Z}_m, L_2, \overline{Z}_{m+n}) -\mathrm{TSP}(\id_H,L_3,\overline{Z}_{m+n})\right| \leq 24(N+1)D,\]
	where $L_3$ denotes the set of lamps active at time $m+n$, and $N$ denotes the number of lamps from $L_1 \cup L_2$ in the region $$\mathcal{M} =\{h \in H| R-4D \leq d_H(\id_H,h) \leq R+4D , \, d_H(h, \gamma) \leq D\}.$$
	
	Moreover, $ |L_1 \cap L_2| \leq|L_1|+|L_2| -|L_3| \leq 2|L_1 \cap L_2|$, and since $L_1 \cap L_2 \subseteq \mathcal{M}$ we have $|L_1|+|L_2| -|L_3| \leq 2N$.
	
	As a result, we have the following inequality: 
	
	$$ |\Psi_{m,n}| \leq 24(N+1)D+2N.$$
	
	Next, we will establish upper bounds on $D$ and $N$ in terms of $\log(m+n)$.
	We already know that $D$ can be bounded above by $C_2 \log(m+n)^2$ for some constant $C_2 \geq 1$ that may be explicitly expressed as a polynomial in $K_0,C_0,C_1$. Moreover, $\mathcal{M}$ has the diameter at most $12D$, so Proposition \ref{lem: uniform progress} implies that the number of positions $\overline{Z}_k$ within $C_0 \log(m+n)$-neighborhood of $\mathcal{M}$ is bounded above by $K_0 (12D+2C_0 \log(m+n))+1$. Since every lamp in  $\mathcal{M}$ is within $C_0 \log(m+n)$  distance from the corresponding position $\overline{Z}_k$, and each position corresponds to at most $C_0 \log(m+n)$  lamps, we conclude that $$ N \leq (K_0 (12D+2C_0 \log(m+n))+1)C_0 \log(m+n).$$
	
	Therefore, we see that $N$ is bounded above by a polynomial of $\log(m+n)$ of the degree at most $3$, and as a result, $24(N+1)D+2N$ may be bounded above by $C_2 \log(m+n)^5$ for some constant $C_2 \geq 1$ that may be explicitly expressed as a polynomial depending only on the constants $K_0, C_0, C_1$ defined above.
	
	As a result, in the case when $\min (m,n) \geq M \log(m+n)^2$, we obtain the inequality 
	
	\[
	\mathbb{E}|\Psi_{m,n}|^p\leq C_2^p \log(n+m)^{5p} +M_2,
	\]
	
	for some constant $M_2 > 0$ which accounts for the expectation over the events of small probability  described above (these are the events where the trajectory $\overline{Z}_0, \ldots, \overline{Z}_{m+n}$ behaves irregularly).
	
	Combining this with the results obtained in Step $1$, we see that there exists a constant $K_p > 0$ such that for all $m,n \in \mathbb{N}$:
	
	\[
	\mathbb{E}|\Psi_{m,n}|^p\leq K_p \log(n+m)^{5p}.
	\]
	
	\subsection*{Step 3.} If we assume that $\mu$ has finite support, then we can run the same argument as in Step $2$, but in this case we may assume that for some constant $C_0 \geq 1$, for every $k = 1, \ldots, m+n,$ the lamps switched at time $k$ are within the distance $C_0$ from the position $\overline{Z}_{k-1}$ in the Cayley graph of $H$ and there are at most $C_0$ such lamps. Moreover, we may also assume that $d_H(\overline{Z}_{k-1},\overline{Z}_{k}) \leq C_0$ for every $k = 1, \ldots, m+n$. Therefore, in our estimate of $D$, the upper bound on the distance between consecutive positions $\overline{Z}_k$ and $\overline{Z}_{k+1}$ will be constant, and the upper bound on $D$ will be linear in $\log(m+n)$.  Moreover, the upper bound on $N$ will be the product of a linear function of $\log(m+n)$ with the largest possible number of active lamps in the element from $\supp{\mu}$, so this time it will also be linear in $\log(m+n).$ 
	As a result, the upper bounds obtained in both previous steps would not exceed $K_2\log(m+n)^{2p}$ for a suitable constant $K_2 \geq 1$, which completes the proof of the inequality.
\end{proof}

Now we are ready to complete the proof of Theorem \ref{CLT acylindrically hyperbolic}.

\begin{thm}[see Theorem 1.1]

	Let $H$ be a finitely generated  acylindrically hyperbolic group and let $A$ be any nontrivial finite group. Assume that $G=A \wr H$ is equipped with a standard metric $d$, and $\mu$ is a symmetric probability measure on $G$ with finite exponential moment such that its projection onto $H$ is a non-elementary measure. Then there exists $\sigma>0$ such that $$\frac{d(\id_G,Z_n) -\ell(\mu,d)n}{\sqrt{n}}$$  converges to $\mathcal{N}(0, \sigma ^2)$ in distribution.

	Moreover, for each $p \geq 1$, there exists a constant $C$ that depends only on $p$ and $\mu$, such that for all $n>1$, $$\E^{\mu}|d(\id_G,Z_n) -\E^{\mu}d(\id_G,Z_n)|^p \leq Cn^{\frac{p}{2}}(\log n)^{5p}. $$
\end{thm}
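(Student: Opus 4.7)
The strategy is to apply the abstract CLT from Theorem~\ref{thm: MS Slow growth CLT} and the abstract moment estimate from Theorem~\ref{thm: moments_general case} to the length defective adapted cocycle $\mathcal{Q} = \{Q_n\}_{n\ge 1}$ defined by $Q_n(\omega) \coloneqq d(\id_G, Z_n(\omega))$ on the lamplighter group $G = A \wr H$. The essential input for both abstract results is already supplied by Theorem~\ref{thm: moments of the defect}, which bounds the $p$-th moments of the defect $\Psi_{m,n}$ by a fixed power of $\log(n+m)$.

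For the convergence in law, I would first apply Theorem~\ref{thm: moments of the defect} with $p=2$ to obtain $\E^{\mu}|\Psi_{m,n}|^{2} \le K_2\log(n+m)^{10}$. The function $g(N) \coloneqq K_2\log(N)^{10}$ is nondecreasing; since polylogarithmic growth is dominated by $\sqrt{N}/(\log N)^{1+\epsilon}$ for any fixed $\epsilon > 0$, one may enlarge the constant if necessary so that $g$ satisfies the slow growth hypothesis (SG). Since $\mu$ has a finite exponential moment with respect to $d$, the cocycle is nonnegative with $\E^{\mu}|Q_1|^2 < \infty$, and Theorem~\ref{thm: MS Slow growth CLT} then yields constants $\ell = \ell(\mu,d)$ and $\sigma \ge 0$ such that $(Q_n - \ell n)/\sqrt{n}$ converges in distribution to a centered Gaussian with variance $\sigma^2$.

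For the central moment bound, I would fix $p > 1$ and apply Theorem~\ref{thm: moments of the defect} to obtain $\E^{\mu}|\Psi_{m,n}|^{p} \le K_p\log(n+m)^{5p}$. Setting $\tilde g(N) \coloneqq K_p\log(N)^{5p}$, which is nondecreasing, and using $\E^{\mu}|Q_1|^p < \infty$, Theorem~\ref{thm: moments_general case} produces a constant $C = C(p,\mu,d) > 0$ such that $\E^{\mu}|Q_n - \E^{\mu}Q_n|^p \le C n^{p/2}\log(n)^{5p}$ for all $n \ge 1$, which is the desired estimate.

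The main obstacle is establishing the strict positivity $\sigma > 0$, since Theorem~\ref{thm: MS Slow growth CLT} alone yields only $\sigma \ge 0$ and does not preclude the degenerate limit. I would address this by adapting the non-degeneracy arguments of Mathieu and Sisto \cite{MathieuSisto2020} to the wreath product setting. The key input is that the non-elementary assumption on $\mu_H$ produces two independent loxodromic elements of $H$ generating a free subsemigroup, from which one constructs two families of admissible finite trajectories whose contributions to $Q_n$ differ by an amount bounded below by a positive constant. The TSP formulation of the standard word metric on $A\wr H$ transfers this gap from the $H$-walk to the cocycle itself, since rerouting the $H$-trajectory into distinct regions of the Cayley graph of $H$ cannot be compensated by any lamp-side cancellation. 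Combining this pivot-type construction with the variance lower-bound machinery of Mathieu--Sisto should then give $\sigma^2 > 0$, completing the proof.
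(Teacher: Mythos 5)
Your reduction of both the convergence-in-law statement and the central moment bound to Theorem~\ref{thm: MS Slow growth CLT}, Theorem~\ref{thm: moments_general case}, and Theorem~\ref{thm: moments of the defect} is exactly what the paper does, including the choice $p=2$ for the CLT and the routine verification that a polylogarithmic bound satisfies the slow growth assumption (SG). Where you deviate is in the treatment of $\sigma>0$, and there your proposal has a genuine gap.

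The paper does not adapt or re-derive a non-degeneracy argument for the wreath product at all: it simply invokes \cite[Theorem 4.12]{MathieuSisto2020}, an off-the-shelf criterion for strict positivity of the limiting variance of a defective adapted cocycle, and notes that this is the only place where symmetry of $\mu$ is used. Your sketch instead proposes to build, from two loxodromics in $H$ generating a free subsemigroup, ``two families of admissible finite trajectories whose contributions to $Q_n$ differ by an amount bounded below by a positive constant,'' and to argue that ``rerouting the $H$-trajectory into distinct regions of the Cayley graph of $H$ cannot be compensated by any lamp-side cancellation.'' None of this is established: you do not construct the two families, you do not prove the claimed gap in $Q_n$, and the no-cancellation claim about the TSP-based word metric is exactly the kind of statement that would require a quantitative argument (indeed, a lamp-side contribution could a priori erase a gap in $\mathrm{TSP}(\id_H,\supp f,h)$, since $|g|_S$ is a sum of the TSP term and the number of lit lamps). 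In effect you are proposing to reprove a version of Mathieu--Sisto's Theorem 4.12 in a more difficult special case, without giving the proof. The correct and much shorter move is to observe that the length DAC on $G=A\wr H$, together with the second-moment defect bound from Theorem~\ref{thm: moments of the defect} and the symmetry of $\mu$, falls directly within the scope of \cite[Theorem 4.12]{MathieuSisto2020}, which yields $\sigma>0$ with no wreath-product-specific work.

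One further small point: Theorem~\ref{thm: moments_general case} (and the original \cite[Theorem 4.9]{MathieuSisto2020}) are stated for $p>1$, and you correctly restrict to $p>1$; the case $p=1$ in the target statement follows from the case $p>1$ by Jensen's inequality, which is worth saying explicitly if you want to match the stated range $p\geq 1$.
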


\begin{proof}
	The CLT for the drift follows from Theorem \ref{thm: MS Slow growth CLT} and Theorem \ref{thm: moments of the defect} applied to $p=2$. The fact that $\sigma>0$ follows from \cite[Theorem 4.12]{MathieuSisto2020}. We note that this the only part of the proof that uses symmetry of $\mu$, and it can be replaced by weaker assumptions, see the statement of \cite[Theorem 4.12]{MathieuSisto2020}.
	
	Finally, the upper bounds on the moments of $|d(\id_G,Z_n) -\E^{\mu}d(\id_G,Z_n)| $ follow from Theorem \ref{thm: moments of the defect} and Theorem \ref{thm: moments_general case}.
\end{proof}

\begin{rem}
	The last claim in Theorem~\ref{thm: moments of the defect} implies that in the case when $\mu$ has finite support, the bounds on central moments of $d(\id_G,Z_n)$ can be improved to 
	$$\E^{\mu}|d(\id_G,Z_n) -\E^{\mu}d(\id_G,Z_n)|^p \leq Cn^{\frac{p}{2}}(\log n)^{2p}.$$
\end{rem}

\subsection{The case of a finitely generated group of lamps}\label{subsec: f.g. lamps} In this subsection we describe the modifications to the proof of Theorem \ref{thm: moments of the defect} that lead to the proof of the CLT in the case where $A$ is an arbitrary finitely generated group. 

Let $S_A$ be a finite symmetric generating set of $A$, and $S_H$ be a finite generating set of $H$.
In this case, the standard generating set of $G=A \wr H$ is given by
\[ S \coloneqq  \Big\{(\delta_a, \id_H), \, (\mathbf{0}, s) \Big|\ a \in S_A, \; \text{and}  \;  s \in S_H \Big\}.
\]
Then for any element $g=(f,h) \in G$ the corresponding word length 
with respect to $S$ can be computed as 
\[|g|_{\mathrm{S}}=\mathrm{TSP}(\id_H, \supp{f},h)  +\sum_{x\in \supp{f}}|f(x)|_{\mathrm{S}_A}\]
We will call the corresponding metric on $G=A \wr H$ the standard metric.

\begin{cor}\label{cor: CLT for general wr product}
	Let $H$ be a finitely generated acylindrically hyperbolic group and let $A$ be any nontrivial finitely generated group. Assume that $G=A \wr H$ is equipped with a standard metric $d$, and $\mu$ is a symmetric probability measure on $G$ with finite exponential moment such that its projection onto $H$ is a non-elementary measure. Then there exists $\sigma>0$ such that $$\frac{d(\id_G,Z_n) -\ell(\mu,d)n}{\sqrt{n}}$$  converges to $\mathcal{N}(0, \sigma ^2)$ in distribution as $n\xrightarrow[]{}\infty$.
\end{cor}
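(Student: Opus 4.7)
The plan is to reduce to Theorem \ref{thm: MS Slow growth CLT} applied to the length cocycle $Q_n = d(\id_G, Z_n)$; this requires a polylogarithmic bound on the second moment of its defect $\Psi_{m,n}$. Writing $Z_n = (F_n, g_n)$ and $(F'_n, g'_n) := Z_n \circ \theta^m$, so that $Z_{m+n} = (F_m \cdot (g_m.F'_n), g_m g'_n)$, the standard length formula on $A \wr H$ decomposes as a $\mathrm{TSP}$-term on $H$ plus a lamp-sum term $\sum_x |f(x)|_{S_A}$. This yields an additive splitting $\Psi_{m,n} = \Psi^{\tsp}_{m,n} + \Psi^{\text{lamp}}_{m,n}$. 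The first piece is exactly the quantity controlled by Theorem \ref{thm: moments of the defect}, applied with $L_1 = \supp{F_m}$, $L_2 = \supp{g_m.F'_n}$, and $L_3 = \supp{F_m \cdot (g_m.F'_n)}$ (which satisfies the required sandwich $L_1 \triangle L_2 \subseteq L_3 \subseteq L_1 \cup L_2$); the argument there is purely geometric and uses only that $\mu$ has finite exponential moment with respect to the standard metric, so it goes through verbatim.

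For the lamp defect, the pointwise triangle inequality in $A$ gives
\[
\Psi^{\text{lamp}}_{m,n} = \sum_{x \in H} \big(|F_m(x)|_{S_A} + |(g_m.F'_n)(x)|_{S_A} - |F_m(x)(g_m.F'_n)(x)|_{S_A}\big) \ge 0,
\]
with each summand vanishing unless $x \in \supp{F_m} \cap \supp{g_m.F'_n}$. I would show that, off an event of overwhelming probability, the total lamp mass over this intersection is polylogarithmic in $m+n$. As in Step 2 of the proof of Theorem \ref{thm: moments of the defect}, Chebyshev and the exponential moment assumption give $d(\id_G, X_k) \le C_0 \log(m+n)$ simultaneously for all $k \le m+n$; on this event every lamp modified at step $k$ sits at $d_H$-distance at most $C_0 \log(m+n)$ from $\overline{Z}_{k-1}$. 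If $x$ lies in the intersection, one extracts indices $k_1 \le m < k_2$ with $d_H(\overline{Z}_{k_1-1}, \overline{Z}_{k_2-1}) \le 2 C_0 \log(m+n)$, and the linear progress estimate of Proposition \ref{lem: uniform progress} forces $|k_i - m| \lesssim \log(m+n)$. A fortiori, every step that writes to such an $x$ has this property. Hence, on this event,
\[
|\Psi^{\text{lamp}}_{m,n}| \le 2 \sum_{|k - m| \le C' \log(m+n)} d(\id_G, X_k),
\]
a sum of $O(\log(m+n))$ i.i.d.\ random variables with finite exponential moment, whose $p$-th moment is polynomial in $\log(m+n)$.

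On the small complementary event, the crude bound $|\Psi_{m,n}| \le \sum_{k=1}^{m+n} d(\id_G, X_k)$ combined with Chebyshev's inequality produces a uniformly bounded contribution to $\E^\mu|\Psi_{m,n}|^p$, exactly as in Step 1 of Theorem \ref{thm: moments of the defect}. Combining with the $\tsp$ bound yields an estimate of the form $\E^\mu|\Psi_{m,n}|^2 \le K \log(m+n)^{10}$ (the precise exponent is unimportant), well within the slow growth regime (SG), so Theorem \ref{thm: MS Slow growth CLT} delivers the Gaussian limit of $(d(\id_G, Z_n) - \ell n)/\sqrt{n}$. Positivity of $\sigma$ follows from \cite[Theorem 4.12]{MathieuSisto2020} using the symmetry of $\mu$, exactly as in the proof of Theorem \ref{CLT acylindrically hyperbolic}. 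The main obstacle beyond the finite-lamps case is precisely the new lamp-sum term, whose defect can be large in principle because individual lamp values $F_m(x) \in A$ are no longer of bounded size; it is tamed above by showing that only the $O(\log(m+n))$ steps near time $m$ can contribute to it, each of which adds an exponentially-integrable amount of lamp mass.
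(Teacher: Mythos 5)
Your proposal is correct and follows essentially the same route as the paper: decompose the defect additively into a TSP part and a lamp-sum part, control the TSP part via Lemma~\ref{comb_lemma_1d} exactly as in Theorem~\ref{thm: moments of the defect}, observe that the lamp defect is supported on $L_1\cap L_2$, and use the linear-progress estimate of Proposition~\ref{lem: uniform progress} to show that only steps within $O(\log(m+n))$ of time $m$ can contribute there. The only (inessential) difference is in the final accounting of the lamp defect: you bound it by a random sum of $O(\log(m+n))$ i.i.d.\ step lengths and then take moments, whereas the paper bounds it deterministically on the good event by $|L_1\cap L_2|$ times a uniform $O(\log^2(m+n))$ bound on each $|f_1(x)|_{S_A}+|f_2(x)|_{S_A}$; both yield bounds within the slow-growth regime and the TSP term dominates either way.
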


\begin{proof}
	The proof follows exactly the same outline as the proof of Theorem \ref{CLT acylindrically hyperbolic}. The only modifications are performed in Step 2 in the proof of Theorem \ref{thm: moments of the defect}. These modifications do not change the fact that the upper bounds on the moments of the defect are polynomial in $\log(m+n)$. \footnote{In fact, a polynomial of degree 5 would still work.} Since a polynomial in $\log(m+n)$ satisfies the slow growth assumption, the results of Section \ref{section: MS} remain applicable in this case as well.
	
	As in Step 2 of Theorem \ref{thm: moments of the defect}, let us fix $p >1$. 
	For $x \in H$, denote by $f_1(x) \in A$ the state of the lamp at $x$ after the first $m$ steps of the random walk. Similarly, let $f_2(x)$ denote the change in the state of the lamp at $x$ under the trajectory of the random walk between the times $m+1$ and $m+n$, and let $f_3(x)$ be the final state of the lamp at $x$ at time $m+n$. It is easy to see that $f_3(x)=f_1(x)f_2(x)$. In the notation from the proof of Theorem \ref{thm: moments of the defect}, $L_1$, $L_2$, and $L_3$ are the supports of $f_1$, $f_2$ and $f_3$ respectively. The arguments related to the defect of $\mathrm{TSP}$ and the upper bounds on $N$ and $D$ remain essentially unchanged.
	However, instead of proving an upper bound on the difference $|L_1|+|L_2| -|L_3|$, we now need to obtain the upper bound on \[\sum_{x\in H}|f_1(x)|_{\mathrm{S}_A}+|f_2(x)|_{\mathrm{S}_A} -|f_3(x)|_{\mathrm{S}_A}. \]
	The triangle inequality for the word length implies that $|f_1(x)|_{\mathrm{S}_A}+|f_2(x)|_{\mathrm{S}_A} -|f_3(x)|_{\mathrm{S}_A}$ is always non-negative, and it is easy to see that it can take positive value only if $x \in L_1 \cap L_2$.
	
	Since we have the inequality $ |L_1 \cap L_2| \leq N$, and Step 2 in Theorem \ref{thm: moments of the defect} shows that $N$ is bounded above by a polynomial in $\log (m+n)$, it suffices to prove that under the assumptions in the Step 2, for any  $x \in L_1 \cap L_2$, the difference  $|f_1(x)|_{\mathrm{S}_A}+|f_2(x)|_{\mathrm{S}_A} -|f_3(x)|_{\mathrm{S}_A}$ can be bounded by a polynomial in $\log(m+n)$ that does not depend on $x$. Moreover, it suffices to find an upper bound on $|f_1(x)|_{\mathrm{S}_A}+|f_2(x)|_{\mathrm{S}_A}$. Notice that in Step 2 we established that with overwhelming probability, for every $k=1 ,\ldots, m+n$, the lamp at $x$ may be modified at time $k$ only if the position of the projected random walk $\overline{Z}_{k-1}$ satisfies the inequality $d_H(x,\overline{Z}_{k-1}) \leq C_0 \log(m+n)$. Moreover, this modification consists of multiplying the current state of the lamp at $x$ on the right by an element $a \in A$  such that $|a|_{\mathrm{S}_A} \leq C_0 \log(m+n)$. It remains to notice that by Proposition \ref{lem: uniform progress}, with overwhelming probability, the projected random walk $\overline{Z}_i$ visits any ball of radius $C_0 \log(m+n)$ in the Cayley graph of $H$ at most $2K_0C_0 \log(m+n)+1$ times.  Therefore, we may conclude that for every $x \in L_1 \cap L_2$,  we have $$|f_1(x)|_{\mathrm{S}_A} \leq C_0 \log(m+n)(2K_0C_0 \log(m+n)+1)$$
	and 
	
	$$|f_2(x)|_{\mathrm{S}_A} \leq C_0 \log(m+n)(2K_0C_0 \log(m+n)+1).$$
	
	As a result, we have established the desired upper bound  $$|f_1(x)|_{\mathrm{S}_A}+|f_2(x)|_{\mathrm{S}_A} \leq 2C_0 \log(m+n)(2K_0C_0 \log(m+n)+1),$$
	that does not depend on $x \in L_1 \cap L_2$. 
	The rest of the proof follows the same steps as those in the proof of Theorem \ref{thm: moments of the defect}.
\end{proof}

	\bibliographystyle{alpha}
	\bibliography{biblio.bib}
\end{document}